\DeclareMathOperator{\dist}{dist}
\DeclareMathOperator{\Proj}{proj}
\DeclareMathOperator{\Lim}{Lim}
\DeclareMathOperator{\Limsup}{Lim\ sup}
\DeclareMathOperator{\Liminf}{Lim\ inf}
\DeclareMathOperator{\M}{\mc{M}}
\begin{document}

\newtheorem{oberklasse}{OberKlasse}
\newtheorem{lemma}[oberklasse]{Lemma}
\newtheorem{proposition}[oberklasse]{Proposition}
\newtheorem{theorem}[oberklasse]{Theorem}
\newtheorem{remark}[oberklasse]{Remark}
\newtheorem*{remark*}{Remark}
\newtheorem{remarks}[oberklasse]{Remarks}
\newtheorem{remarks*}{Remarks}
\newtheorem{corollary}[oberklasse]{Corollary}
\newtheorem{definition}[oberklasse]{Definition}
\newcommand{\R}{\ensuremath{\mathbbm{R}}}
\newcommand{\N}{\ensuremath{\mathbbm{N}}}
\newcommand{\F}{\ensuremath{\mathcal{F}}}
\newcommand{\h}{\ensuremath{\mathcal{H}}}
\newcommand{\CB}{\ensuremath{\mathcal{CBC}}}
\newcommand{\wll}{\langle\!\langle}
\newcommand{\wrr}{\rangle\!\rangle}

\newcommand{\tc}{\textcolor}
\newcommand{\mc}{\mathcal}

\newcommand{\wolfj}[1]{{\color{blue}{Wolf-J\"urgen: #1}}}
\newcommand{\etienne}[1]{{\color{red}{Etienne: #1}}}
\newcommand{\janosch}[1]{{\color{red}{Janosch: #1}}}

\renewcommand{\phi}{\ensuremath{\varphi}}
\renewcommand{\epsilon}{\ensuremath{\varepsilon}}

\renewcommand{\thefootnote}{\fnsymbol{footnote}}

\title{Semilinear Parabolic Differential Inclusions \\
with One-sided Lipschitz Nonlinearities}
\author{Wolf-J\"urgen Beyn\footnotemark[1]{$ \; \; $}\footnotemark[3] \quad Etienne Emmrich\footnotemark[4]\\ 
Janosch Rieger\footnotemark[5]${ \; \; }$\footnotemark[3]}
\date{\today }

\maketitle

\abstract{We present an existence result for a partial differential 
inclusion with linear parabolic principal part and relaxed one-sided 
Lipschitz multivalued nonlinearity in the framework of Gelfand triples.
Our study uses discretizations of the differential inclusion by a 
Galerkin scheme, which is compatible with a conforming finite element method,
and we analyze convergence properties of the discrete solution sets.}

\footnotetext[1]{Department of Mathematics, Bielefeld University, Bielefeld, Germany}
\footnotetext[3]{work supported by DFG in the framework of CRC 701, project B3.}
  \footnotetext[4]{Institute of  Mathematics, Technical University of Berlin,
   Berlin, Germany \\ 
  work supported by DFG in the framework of CRC 910, project A8.}
  \footnotetext[5]{School of Mathematical Sciences, Monash University,
    Melbourne, Australia}
 {\bf Key words.} Partial differential inclusion, one-sided Lipschitz condition,
Galerkin method.\\
{\bf AMS subject classification.} primary: 35R70, 65M60; secondary: 35K20, 35K91, 49J53.
\section{Introduction} \label{sec1}
We consider the inital value problem for semilinear partial differential inclusions of the form
\begin{equation} \label{introduction:PDI}
u'(t) + Au(t) \in F(t,u(t))\ \text{for}\ t \in (0,T), \quad u(0,\cdot)=u_0,
\end{equation}
in a Gelfand triple $V\subseteq H \subseteq V^*$
with a strongly positive, linear, and bounded operator $A:V\rightarrow V^*$ 
and a genuinely set-valued nonlinearity $F:V\rightrightarrows H$ with closed,
bounded, and convex images. 
In contrast to partial differential equations with maximal monotone 
principle part (see \cite{barbu10,Deimling:85}), 
differential inclusions of type \eqref{introduction:PDI} possess a nontrivial 
solution set.
They may be considered the deterministic counterparts of stochastic 
partial differential equations since  they model deterministic 
uncertainty by a set-valued operator. 
Likewise, they provide a framework for the analysis of control systems of 
partial differential equations with control constraints (see e.g.\ 
\cite{Colonius:00,Vinter:00}).

\medskip

In the present paper, we discuss a weak reformulation 
\begin{subequations} \label{introPDI:all}
\begin{align}
&\langle u'(t),v \rangle + a(u(t),v) = (f(t),v)\quad\dot\forall\,t\in (0,T),\,\forall\,v\in V,
\label{introPDI1}\\
&f(t) \in F(t,u(t))\quad\dot\forall\,t\in (0,T),
\label{introPDI2}\\
&u(0) = u_0,   \label{introPDI:IV}
\end{align}
\end{subequations}
of inclusion \eqref{introduction:PDI}, where 
$(\cdot,\cdot):H\times H\to\R$ is the inner product in $H$, 
the duality pairing between $V^*$ and $V$ is denoted by
$\langle\cdot,\cdot\rangle:V^*\times V~\to~\R$, 
the bilinear form associated with the operator $A$ is
$a:V\times V \rightarrow~\R$, 
and the symbol '$\dot\forall$' means 'for Lebesgue-almost every'.
For an overview of existence and uniqueness results for differential inclusions
of type \eqref{introPDI:all}, we refer to \cite{Hu:Papageorgiou:00}.
Given a Galerkin scheme $(V_N)_{N \in \N}$ of finite-dimensional
subspaces of $V$, the approximate Galerkin inclusion 
\begin{subequations} \label{introGDI:all}
\begin{align}
&\langle u_N'(t),v\rangle + a(u_N(t),v) = (f_N(t),v)\quad 
\dot\forall\,t\in (0,T),\,\forall\,v\in V_N, \label{introGDI1}\\
&f_N(t)\in {F}(t,u_N(t))\quad\dot\forall\,t\in (0,T),
\label{introGDI2} \\
&u_N(0) = u_{N,0} \in V_N,   \label{introGDI:IV}
\end{align}
\end{subequations}
is compatible with standard conforming finite element approaches
for parabolic partial differential equations.

Our goal is to study the convergence of the solution set of inclusion
\eqref{introGDI:all} to the solution set of inclusion
\eqref{introPDI:all} with respect to the Hausdorff metric in $L^2(0,T;H)$.
The main novelty of our approach is to establish such a result
for a multivalued nonlinearity satisfying a relaxed one-sided Lipschitz 
property (see assumption (A4) for details).
This property, which goes back to Donchev (see e.g.\ \cite{Donchev:02}), 
is much weaker than standard Lipschitz or dissipativity conditions
treated in \cite{Hu:Papageorgiou:00}, and it found many applications 
in the theory and numerical analysis of differential inclusions, see e.g.\
\cite{Beyn:Rieger:10,Beyn:Rieger:12,DFM07,DFR07,MT16,Rieger:12a}.

\section{Problem setting and main results}
\label{sec2}
In this section we present the main results and specify the analytical setting
underlying the differential inclusion \eqref{introPDI:all}
and its Galerkin approximation \eqref{introGDI:all}.

\subsection{Preliminaries from set-valued analysis}
We refer to the monographs
\cite{Aubin:Frankowska:90} and \cite{Hu:Papageorgiou:97} for general notions 
from set-valued analysis. In the following we specify some notation that
will be  used throughout this paper.
In the following, let $X$ and $Y$ be normed spaces. 

\begin{definition}
  For any $x\in X$ and any subset $M\subseteq X$, we define the distance of
  $x$ to $M$ by
  \[ \dist(x,M)_X = \inf \{ \|x-y\|_X : y \in M \}\]
  and the proximal set by 
\[\Proj(x,M)_X:=\{y\in M: \|x-y\|_X\le\|x-z\|_X\ \forall z \in M\}.\]
\end{definition}

Recall that $\Proj(x,M)_X$ is a singleton in case $M$ is convex and
$X$ is a Hilbert space.
Moreover, by a common abuse of notation, we write
\[\|M\|_{X} := \sup_{x\in M}\|x\|_{X} \quad \text{for} \;  M\subseteq X\; \; \text{bounded.}\]
 
By $\CB(X)$ we denote the set of all closed, bounded, and convex subsets of $X$.
There are various ways of defining a topology on $\CB(X)$ which in general
are not equivalent. We will use convergence in the Hausdorff and in the
Kuratowski sense.

\begin{definition}
For any two sets $M,\widetilde{M}\subseteq X$, the 
Hausdorff semi-distance and the Hausdorff distance are defined by
\begin{equation} \label{eq:hausdorff}
\begin{aligned}
&\dist(M,\widetilde{M})_{X} 
:= \sup_{x\in M}\dist(x,\widetilde{M})_{X},&\\
&\dist_{\h}(M,\widetilde{M})_{X} 
:= \max\{\dist(M,\widetilde{M})_{X},\dist(\widetilde{M},M)_{X}\}. &
\end{aligned}
\end{equation}
\end{definition}

It is well-known that $\dist_{\h}$ defines a metric on $\CB(X)$.

\begin{definition}
A sequence $\{M_n\}_{n\in \N}$ of sets $M_n \subseteq X$ is said to converge to 
a set $M\subseteq X$ in Kuratowski sense, which is denoted by 
$\Lim_{n\rightarrow\infty}M_n=M$, if
\[\Limsup_{n\rightarrow\infty} M_n\subseteq M \subseteq \Liminf_{n\rightarrow\infty} M_n,\]
where the upper and lower Kuratowski limits of a sequence $(M_n)_{n\in\N}$ are given by
\begin{align*}
  \Limsup_{n\rightarrow\infty} M_n &:= \{x\in X: \liminf_{n\rightarrow\infty}
  \dist(x,M_n)_X=0\},\\
  \Liminf_{n\rightarrow\infty} M_n &:= \{x\in X: \lim_{n\rightarrow\infty}
  \dist(x,M_n)_X=0\}.
\end{align*}
\end{definition}

An important situation where convergence in the sense of Kuratowski implies
Hausdorff convergence is given in the following Lemma,
which is a slight variation of \cite[Chapter 7, Proposition 1.19]{Hu:Papageorgiou:97}. 

\begin{lemma} \label{Kuratowski:lemma}
Let $B\subseteq X$ be relatively compact 
and let $M_n,M\subseteq B$ for all $n\in\N$.
If $\Lim_{n\to\infty}M_n=M$, then $\dist_\h(M_n,M)_X\rightarrow 0$ as $n\to\infty$.
\end{lemma}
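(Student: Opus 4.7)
The plan is to show the two Hausdorff semi-distances $\dist(M,M_n)_X$ and $\dist(M_n,M)_X$ vanish separately, exploiting in each case the relative compactness of $B$ together with one half of the Kuratowski limit inclusion. As a preliminary, I would observe that $M$ is (relatively) compact: we always have $\Liminf_{n\to\infty}M_n\subseteq\Limsup_{n\to\infty}M_n$, so the assumption $\Lim_{n\to\infty}M_n=M$ forces $M=\Limsup_{n\to\infty}M_n$, which is a closed subset of the compact set $\overline{B}$. This closedness of $M$ will be convenient both for the epsilon-net argument and for the continuity of $\dist(\cdot,M)$.

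For the bound $\dist(M,M_n)_X\to 0$, I would combine pointwise convergence with a uniformization via compactness. From $M\subseteq\Liminf_{n\to\infty}M_n$, for every $x\in M$ one has $\dist(x,M_n)_X\to 0$. To upgrade this to uniform convergence in $x\in M$, I would fix $\epsilon>0$, use compactness of $M$ to cover it by finitely many balls of radius $\epsilon/2$ centred at $x_1,\dots,x_k\in M$, and choose $n$ so large that $\dist(x_i,M_n)_X<\epsilon/2$ for $i=1,\dots,k$. The $1$-Lipschitz continuity of $y\mapsto\dist(y,M_n)_X$ then yields $\dist(x,M_n)_X\le\dist(x_i,M_n)_X+\|x-x_i\|_X<\epsilon$ for any $x\in M$ and a suitable $i$, giving $\dist(M,M_n)_X\le\epsilon$ for $n$ large.

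For the reverse bound $\dist(M_n,M)_X\to 0$, I would argue by contradiction. If it failed, there would exist $\epsilon>0$, a subsequence $(n_k)$, and points $x_{n_k}\in M_{n_k}\subseteq B$ with $\dist(x_{n_k},M)_X\ge\epsilon$. Since $B$ is relatively compact, a further subsequence, still denoted $(x_{n_k})$, converges to some $x\in X$. Because $\|x-x_{n_k}\|_X\to 0$ and $x_{n_k}\in M_{n_k}$, we have $\liminf_{n\to\infty}\dist(x,M_n)_X=0$, so $x\in\Limsup_{n\to\infty}M_n\subseteq M$. But continuity of $\dist(\cdot,M)_X$ then gives $\dist(x_{n_k},M)_X\to\dist(x,M)_X=0$, contradicting $\dist(x_{n_k},M)_X\ge\epsilon$.

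The only genuinely delicate step is the first one, where pointwise convergence of $\dist(x,M_n)_X$ on $M$ must be promoted to uniform convergence; this is where the compactness inherited from $B$ (rather than Kuratowski convergence alone) is essential, and where the $1$-Lipschitz property of the distance functional is used crucially. The second bound is a straightforward subsequence argument once one has noticed that relative compactness of $B$ supplies the convergent subsequence and that $M=\Limsup_{n\to\infty}M_n$ is closed.
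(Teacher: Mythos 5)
Your proof is correct, but it takes a genuinely different route from the paper's in one of the two halves. For the semi-distance $\dist(M_n,M)_X$ you argue exactly as the paper does: extract from $B$ a convergent subsequence $x_{n_k}\to x$, identify $x\in\Limsup_{n\to\infty}M_n\subseteq M$, and contradict the assumed lower bound $\epsilon$. For the other semi-distance $\dist(M,M_n)_X$, however, the paper also proceeds by contradiction: it picks $x_n\in M$ with $\dist(x_n,M_n)_X>\epsilon$ along a subsequence, extracts a limit $x\in\bar M$ using relative compactness of $M$, then invokes the lower limit property to produce approximating sequences $x^n_k\in M_k$ and makes a diagonal choice $k_n$ with $\|x^n_{k_n}-x_n\|_X\le\frac1n$ to force $\dist(x_{k_n},M_{k_n})_X\to 0$ --- a double-subsequence diagonal argument whose index bookkeeping ($\N'$, $\N''$, $k_n$) is the most delicate part of their proof. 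You instead argue directly: $M\subseteq\Liminf_{n\to\infty}M_n$ says the $1$-Lipschitz functions $\dist(\cdot,M_n)_X$ converge pointwise to $0$ on $M$, and pointwise convergence of a uniformly Lipschitz family on a compact set is uniform, which you implement with a finite $\epsilon/2$-net. Your preliminary observation is also sound: since always $\Liminf_{n\to\infty}M_n\subseteq\Limsup_{n\to\infty}M_n$, the hypothesis forces $M=\Limsup_{n\to\infty}M_n$, which is closed, so $M$ is a compact subset of $\bar B$ and the net exists. What each approach buys: your net argument replaces the diagonal extraction by a quantifier-explicit uniformization and makes transparent exactly where compactness (rather than Kuratowski convergence alone) enters; the paper's argument avoids any need to note closedness of $M$ (relative compactness inherited from $B$ suffices, with centers taken in $M$ itself), but pays for it with the nested-subsequence construction. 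Both are complete proofs of the lemma.
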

\begin{proof}
If $\dist(M_n,M)_X\rightarrow 0$ as $n\to\infty$ is false, there exist $\epsilon>0$ 
and a sequence $\{x_n\}_{n\in\N}$ with $x_n\in M_n$ such that $\dist(x_n,M)_X>\epsilon$ 
along a subsequence $\N'\subset\N$. 
But $x_n\in B$ for all $n\in\N$ implies that there exists a subsequence $\N''\subset\N'$ 
and $x\in\bar B$ such that $x_n\rightarrow x$ as $\N''\ni n\to\infty$. 
By the Kuratowski upper limit property, we have $x\in M$, which is a contradiction.

If $\dist(M,M_n)_X\rightarrow 0$ as $n\to\infty$ is false, there exist $\epsilon>0$ 
and a sequence $\{x_n\}_{n\in\N}\subseteq M$ such that $\dist(x_n,M_n)_X>\epsilon$ 
along a subsequence $\N'\subset\N$. 
By the relative compactness of $M$, there exists a subsequence $\N''\subseteq\N'$
such that $x_n\rightarrow x\in\bar M$ as $\N''\ni n\to\infty$. 
By the Kuratowski lower limit property, for every $n\in\N''$ there exist sequences 
$\{x^n_k\}_{k\in\N}$ with $x^n_k\in M_k$ such that $x^n_k\to x_n$ as $k\to\infty$. 
In particular, there exist $k_n\in\N$ with $k_n\to\infty$ as $n\to\infty$
and $\|x^n_{k_n}-x_n\|_X\le\frac{1}{n}$.
But then $x^n_{k_n}\to x$ as $n\to\infty$, and hence
\[\dist(x_{k_n},M_{k_n})_X\le\|x_{k_n}-x^n_{k_n}\|_X\le\|x_{k_n}-x\|_X+\|x-x^n_{k_n}\|_X
\to 0,\]
which is a contradiction.
\end{proof}

This statement will be crucial 
for proving uniform convergence of Galerkin solution sets.

We adopt the notion of measurability from  \cite[Def.~8.1.1]{Aubin:Frankowska:90}.
\begin{definition} \label{defmeasurable}
Let $F:X \rightrightarrows Y$ be a multivalued mapping with closed images.
Then $F$ is called measurable if the preimage
\[F^{-1}(M):=\{x\in X: F(x)\cap M\neq\emptyset\}\subseteq X\]
of any open set $M\subseteq Y$ is a Borel set in $X$.
\end{definition} 

In a finite-dimensional space, the following notion of upper semicontinuity 
is equivalent to the concepts in \cite{Aubin:Frankowska:90} and \cite{Deimling:92}, 
as can be shown by an elementary argument and \cite[Proposition 1.1]{Deimling:92}.
In particular, Proposition 1.4.9 from \cite{Aubin:Frankowska:90} 
and Theorem 5.2 from \cite{Deimling:92} hold for mappings with this property.

\begin{definition}
A set-valued mapping $G:\R^N\rightarrow\CB(\R^N)$ is called upper semicontinuous 
if $x=\lim_{n\rightarrow\infty}x_n$ for any sequence $(x_n)_{n\in \N}$ in
$\R^N$ implies 
\[\dist(G(x_n),G(x))_{\CB(\R^N)}\rightarrow 0\ \text{as}\ n\rightarrow\infty.\]
\end{definition}

For further definitions and elementary facts concerning  measurability of 
multivalued mappings, we refer to \cite{Aubin:Frankowska:90}.
The present paper deviates from this source inasmuch as a multivalued mapping
$F:X\rightarrow\CB(Y)$ will be called continuous at $x\in X$ if for any sequence
$\{x_n\}_{n\in\N}\subseteq X$ with $\lim_{n\to\infty}\|x_n-x\|_X=0$ we have
\[\dist_\h(F(x),F(x_n))_Y \rightarrow 0\ \text{as}\ n\rightarrow\infty.\]

\subsection{Preliminaries from the theory of differential inclusions}

The following result is an adapted version of the existence theorem 
\cite[Theorem 5.2]{Deimling:92} for the initial value problem for a finite-dimensional differential inclusion
\begin{equation} \label{diffincl}
u'\in G(t,u) \quad \dot\forall\,t\in(0,T), \quad u(0)=u_0.
\end{equation}
In this setting, solutions are elements of the space $\mathrm{AC}([0,T];\R^N)$
of absolutely continuous functions.

\begin{theorem} \label{existencemultiODE}
Let $G:[0,T]\times\R^N\rightarrow\CB(\R^N)$ be a set-valued map with the following properties:
\begin{itemize}
\item[(i)] The mapping $t\mapsto G(t,v)$ is measurable for all $v \in \R^N$.
\item[(ii)] The mapping $v\mapsto G(t,v)$ is upper semicontinuous for almost every $t\in(0,T)$.
\item[(iii)] There exists $K\ge 0$ such that all solutions
$u\in\mathrm{AC}([0,T'];\R^N)$ of inclusion \eqref{diffincl} on $(0,T')$ with $0<T'\le T$
satisfy the a priori bound $\|u\|_{L^{\infty}(0,T';\R^N)}\le K$.
\end{itemize}
Then there exists a global solution $u\in\mathrm{AC}([0,T];\R^N)$ of \eqref{diffincl}.
\end{theorem}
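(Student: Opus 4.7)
The plan is to reduce the problem to one with a bounded right-hand side by a truncation argument, invoke the cited existence theorem \cite[Theorem 5.2]{Deimling:92} for the truncated inclusion, and then use hypothesis (iii) to argue that any solution of the truncated problem is in fact a solution of the original inclusion \eqref{diffincl}.

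First I would introduce the radial retraction $\rho:\R^N\to\bar B(0,K+1)$ defined by $\rho(v):=v$ for $\|v\|\le K+1$ and $\rho(v):=(K+1)v/\|v\|$ otherwise, and set $\tilde G(t,v):=G(t,\rho(v))$. Since $\rho$ is continuous and takes a constant value when $v$ is fixed, measurability of $t\mapsto\tilde G(t,v)$ is immediate from (i), and upper semicontinuity of $v\mapsto\tilde G(t,v)$ follows from (ii) because composition of an u.s.c.\ multivalued map with a continuous single-valued map is u.s.c. A short sequential argument using u.s.c.\ and the closed-bounded-image property shows that $G(t,\bar B(0,K+1))=\bigcup_{w\in\bar B(0,K+1)}G(t,w)$ is bounded for almost every $t$: otherwise a sequence $w_n\in\bar B(0,K+1)$ with $y_n\in G(t,w_n)$ and $\|y_n\|\to\infty$ would, after extracting $w_n\to w$, contradict $\dist(G(t,w_n),G(t,w))\to 0$. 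Consequently $\tilde G(t,\cdot)$ is uniformly bounded in $v$ for a.e.\ $t$, placing $\tilde G$ within the bounded-right-hand-side framework of \cite[Theorem 5.2]{Deimling:92}, which yields a global solution $u\in\mathrm{AC}([0,T];\R^N)$ of $u'(t)\in\tilde G(t,u(t))$ with $u(0)=u_0$.

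To promote $u$ to a solution of the original inclusion, I would employ a maximal-interval argument. Set
\[T^*:=\sup\{T'\in[0,T]:\|u(s)\|\le K+1\ \text{for all}\ s\in[0,T']\}.\]
Hypothesis (iii) applied to any hypothetical solution is incompatible with $\|u_0\|>K$, so we may assume $\|u_0\|\le K$, and continuity of $u$ then yields $T^*>0$. On $[0,T^*]$ the retraction $\rho$ acts as the identity on $u(s)$, so $\tilde G(s,u(s))=G(s,u(s))$, meaning $u|_{[0,T^*]}$ is an absolutely continuous solution of \eqref{diffincl} on $(0,T^*)$; (iii) now gives $\|u|_{[0,T^*]}\|_\infty\le K$. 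If $T^*<T$ held, continuity of $u$ at $T^*$ together with $\|u(T^*)\|\le K<K+1$ would yield a neighbourhood of $T^*$ on which $\|u\|\le K+1$, contradicting maximality of $T^*$. Hence $T^*=T$ and $u$ is a global solution of \eqref{diffincl}.

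The main obstacle is the precise verification that $\tilde G$ fits the bounded-right-hand-side hypotheses of \cite[Theorem 5.2]{Deimling:92}; in particular, measurability of the bound function $t\mapsto\|G(t,\bar B(0,K+1))\|$ requires a countable-dense-subset reduction combining (i) with the u.s.c.\ property in (ii). Once this technical check is carried out, the truncation-and-maximality steps above are routine.
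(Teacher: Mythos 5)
Your overall architecture—radial truncation at radius $K+1$, black-box application of \cite[Theorem 5.2]{Deimling:92} to the truncated map $\tilde G$, then de-truncation via hypothesis (iii) and a maximal-interval argument—differs from the paper's treatment. The paper gives no self-contained proof at all: it observes that the linear growth bound $\|G(t,v)\|\le c(t)(1+\|v\|)$ in Deimling's theorem enters his proof only through the derivation of the a priori estimate, and therefore re-runs Deimling's proof with (iii) substituted for that estimate, rather than invoking the theorem as a finished black box. Your retraction step ($\rho$ continuous, hence $\tilde G(t,\cdot)$ u.s.c.\ and $\tilde G(\cdot,v)$ measurable), your pointwise boundedness argument for $\|G(t,\bar B(0,K+1))\|$ at fixed $t$, and your final bootstrapping argument (on $[0,T^*]$ the retraction is the identity along $u$, so (iii) forces $\|u\|_\infty\le K$ there, and $T^*<T$ contradicts maximality) are all correct as written.

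The genuine gap is in the middle step, and it is more serious than the measurability issue you flag. To apply \cite[Theorem 5.2]{Deimling:92} to $\tilde G$ you need a majorant $\|\tilde G(t,v)\|\le c(t)$ with $c\in L^1(0,T)$; finiteness of $c(t)=\|G(t,\bar B(0,K+1))\|$ for almost every fixed $t$ is not enough, and hypotheses (i)--(iii) cannot supply integrability. Indeed, take $N=1$ and $G(t,v)=\{1/t\}$ for $t\in(0,T]$: (i) and (ii) hold, and (iii) holds vacuously because an absolutely continuous solution would require $u'(t)=1/t\in L^1$, so neither \eqref{diffincl} nor your truncated problem has any solution; thus the step ``Deimling yields a global solution of the truncated inclusion'' cannot be justified from (i)--(iii) alone. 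Moreover, the countable-dense-subset device you propose for measurability of $c$ fails here: with the paper's semidistance definition of upper semicontinuity, $v\mapsto\|G(t,v)\|$ is only upper semicontinuous, and the supremum over a dense subset can strictly undershoot the supremum over the ball (the map may peak at a point off the dense set). The honest repair—consistent with how the theorem is actually used in Proposition \ref{Filippov}—is to add an integrable local-boundedness datum, e.g.\ $\|G(t,v)\|\le c(t)$ for $\|v\|\le K+1$ with $c\in L^1(0,T)$, which in the application is available from \eqref{L1bound} together with the boundedness of $A_N$ and the norm equivalence on $V_N$; with that in hand your truncation argument goes through verbatim and the measurability worry disappears, since only the majorant is needed. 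It is worth noting that the paper's remark glosses over the same point: the integrable growth function in Deimling's proof also dominates the derivatives of the approximate solutions in the compactness step, not only the Gronwall estimate, so strictly speaking the statement of Theorem \ref{existencemultiODE} tacitly presumes such integrable boundedness on bounded sets.
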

Note that Theorem \ref{existencemultiODE} differs from \cite[Theorem 5.2]{Deimling:92}, 
which requires a linear bound 
\[\|G(t,v)\| \le c(t)(1+\|v\|)\quad\dot\forall\,t\in(0,T),\,\forall\,v\in\R^N.\]
This bound is, however, exclusively used to prove a-priori bound (iii),
which we assume in our setting. 

\medskip

We also use the following refined version of Gronwall's Lemma.

\begin{lemma} \label{extended:Gronwall}
Let $s\in\mathrm{AC}([0,T];\R_+)$, let $\kappa\in L^1((0,T);\R)$, 
and let $\rho\in L^1((0,T);\R_+)$ be such that
\begin{equation} \label{bad:scalar:ODE}
s(t)s'(t) \le \kappa(t)s(t)^2+\rho(t)s(t)\quad\dot\forall\,t\in(0,T).
\end{equation}
Then we have
\begin{align} 
&s'(t) \le \kappa(t)s(t)+\rho(t)\quad\dot\forall\,t\in(0,T) \label{desired:scalar:ODE},\\
&s(t) \le s(0)e^{\int_0^t\kappa(\tau)d\tau} + \int_0^te^{\int_{\sigma}^t\kappa(\tau)d\tau}\rho(\sigma)d\sigma\quad\forall\,t\in[0,T].
\label{extended:Gronwall:bound}
\end{align}
\end{lemma}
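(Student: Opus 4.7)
The plan is to first establish the pointwise inequality \eqref{desired:scalar:ODE} and then derive \eqref{extended:Gronwall:bound} from it via the standard integrating-factor trick for scalar linear first-order ODEs.

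The only genuine obstacle is that one cannot simply divide \eqref{bad:scalar:ODE} by $s(t)$, because $s$ may vanish. To circumvent this, I would split $(0,T)$ into $E_+:=\{t:s(t)>0\}$ and $E_0:=\{t:s(t)=0\}$. Since $s\in\mathrm{AC}([0,T];\R_+)$, the derivative $s'$ exists almost everywhere. At any $t\in E_+$ where $s$ is differentiable and \eqref{bad:scalar:ODE} holds, dividing by $s(t)>0$ yields \eqref{desired:scalar:ODE} directly.

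The delicate case is $t\in E_0$ at a differentiability point. Here $s\ge 0$ attains a minimum at $t$, so for $h>0$ small the difference quotient $(s(t+h)-s(t))/h=s(t+h)/h\ge 0$, while for $h<0$ small it is $\le 0$. Since both one-sided limits must equal the (two-sided) derivative $s'(t)$, we conclude $s'(t)=0$, and together with $\rho(t)\ge 0$ and $\kappa(t)s(t)=0$ this gives \eqref{desired:scalar:ODE} at such $t$. Combining the two cases yields \eqref{desired:scalar:ODE} almost everywhere on $(0,T)$.

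For \eqref{extended:Gronwall:bound}, I would introduce the integrating factor $\phi(t):=e^{-\int_0^t\kappa(\tau)d\tau}$, which lies in $\mathrm{AC}([0,T];\R_+)$ with $\phi'(t)=-\kappa(t)\phi(t)$ for almost every $t$. Then $s\phi\in\mathrm{AC}([0,T];\R_+)$, and the product rule together with \eqref{desired:scalar:ODE} gives
\[
(s\phi)'(t)=s'(t)\phi(t)-\kappa(t)s(t)\phi(t)\le \rho(t)\phi(t)\quad\dot\forall\,t\in(0,T).
\]
Integrating this inequality from $0$ to $t$ and multiplying by $1/\phi(t)=e^{\int_0^t\kappa(\tau)d\tau}$ (noting that $\phi(t)/\phi(\sigma)=e^{-\int_\sigma^t\kappa(\tau)d\tau}$) produces \eqref{extended:Gronwall:bound}. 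Apart from identifying the minimum-point argument on $E_0$, every step is routine.
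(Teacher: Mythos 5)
Your proof is correct, but it handles the crux of the lemma by a genuinely different device than the paper. Where you must show $s'(t)=0$ at differentiability points of the zero set $E_0=\{t:s(t)=0\}$, the paper invokes the Lebesgue density theorem: it passes to the density points $\tilde Z$ of $Z=E_0$ (a full-measure subset of $Z$) and deduces $s'(t)=0$ there from difference quotients along points of $Z$. You instead observe that since $s\ge 0$, every zero of $s$ is a global minimum, so the Fermat-type one-sided difference-quotient argument forces $s'(t)=0$ at \emph{every} interior differentiability point of $E_0$, not merely almost every one. Your route is more elementary (no measure-theoretic machinery beyond a.e.\ differentiability of AC functions) but leans on the standing hypothesis $s\ge 0$; the paper's density-point argument is insensitive to the sign of $s$ and would survive in variants of the lemma where $s$ may change sign, at the cost of citing the density theorem. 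A second, minor difference: the paper ends by appealing to ``the Gronwall Lemma'' for \eqref{extended:Gronwall:bound}, while you carry out the integrating-factor computation with $\phi(t)=e^{-\int_0^t\kappa(\tau)d\tau}$ explicitly; spelling this out is a sensible choice here, since $\kappa\in L^1((0,T);\R)$ may change sign and many textbook statements of Gronwall assume $\kappa\ge 0$, whereas the integrating-factor argument (with $\phi$ absolutely continuous and $s\phi\in\mathrm{AC}$) needs no such restriction. Both proofs are complete and of comparable length.
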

\begin{proof}
Consider the set $Z:=\{t\in [0,T]: s(t)=0\}$, and let $\tilde Z\subset Z$ 
be the subset of all density points of $Z$. Then $Z$ and $\tilde Z$
have the same Lebesgue measure due to
the Lebesgue density theorem \cite[Theorem 2.2.1]{Kannan:Krueger:96}.
If $t\in [0,T]\setminus Z$, then inequality \eqref{desired:scalar:ODE} holds,
because both sides of inequality \eqref{bad:scalar:ODE} can be divided by $s(t)$. 
If $t\in\tilde Z$ and $s'(t)$ exists,
then $s'(t)=0$ and inequality \eqref{desired:scalar:ODE} holds, because $s(t)=0$ 
and $\rho(t)\ge 0$. 
Thus $s'$ satisfies inequality \eqref{desired:scalar:ODE} almost everywhere in $(0,T)$, 
and the Gronwall Lemma yields the estimate \eqref{extended:Gronwall:bound}.
\end{proof}

\subsection{Function spaces}
\label{sec2.2}
Our standing assumptions on the underlying spaces are as follows.
\begin{itemize}
\item[(S1)] Let $(V,\|\cdot\|_V)$ be a separable Hilbert space,
densely and compactly embedded in a Hilbert space $(H,(\cdot,\cdot),\|\cdot\|_H)$.
\end{itemize}
In particular, there exists $c_{VH}>0$ such that
\[\|v\|_H\le c_{VH}\|v\|_V\quad\forall\,v\in V.\]
The dual space $(V^*,\|\cdot\|_{V^*})$ of $V$ is equipped with the norm
\[\|f \|_{V^*}=\sup_{\|v\|_V=1}|\langle f, v \rangle|,\] 
where $\langle\cdot,\cdot\rangle:V^*\times V\to\R$ is the duality pairing.
We identify $H$ with its dual so that $V\subseteq H\subseteq V^*$ 
form a Gelfand triple with 
\begin{align*}
\langle u,v\rangle=(u,v)\quad\forall\,u\in H,\,v\in V.
\end{align*}
Our assumptions on the Galerkin scheme for the inclusion
\eqref{introPDI:all} are summarized below.
\begin{itemize}
\item[(S2)] Let $\{V_N\}_{N\in\N}$
be a nested sequence of finite-dimensional subspaces of $V$ such that 
for all $v \in  V$
\begin{equation*}
\dist(v,V_N)_{V} \rightarrow 0 \; \text{as} \; N \rightarrow 
\infty.
\end{equation*}
\item[(S3)] There exists $C_P>0$ such that the $H$-orthogonal projection 
$P_N:H\rightarrow V_N$ onto $V_N$ satisfies $\|P_Nv\|_V\le C_P\|v\|_V$ for all $v\in V$ and $N\in\N$.
\end{itemize}
Condition (S3) is crucial for the convergence result in 
Theorem \ref{upper:Kuratowski}. It holds for various types of 
finite element spaces under suitable geometric conditions, see \cite{carstensen02,emmrich:siska:12} and the references therein.

\medskip

For a Banach space $X$, we denote the spaces of all Bochner measurable 
and all continuous functions from $[0,T]\subset\R$ to $X$ by  $\M(0,T;X)$
and $\mc{C}([0,T],X)$, respectively. 
We refer to \cite{Diestel:Uhl:98,Roubicek:05} for the general theory of 
Bochner-Lebesgue spaces.
Several equivalent notions of measurability are discussed in
\cite[p.93, Proposition 12]{dinculeanu67} and \cite[p.42]{Diestel:Uhl:98}.
By $L^r(0,T;X) (1 \le r < \infty)$ we denote the functions in $\M(0,T;X)$ with finite norm
\begin{equation*}
\|u\|_{L^r(0,T;X)} = (\int_0^T \|u(t)\|_X^r dt)^{1/r} .
\end{equation*}
In case $X=\R$ we write $L^r(0,T)$ instead of $L^r(0,T;\R)$.
A function $u \in L^1_{\mathrm{loc}}(0,T;V^*)$ has a weak derivative
$u' \in L^1_{\mathrm{loc}}(0,T;V^*)$ provided
\begin{equation*} 
\int_0^T u'(t) \varphi(t) dt = - \int_0^T u(t) \varphi'(t) dt 
\quad\forall\,\varphi\in C_c^{\infty}(0,T),
\end{equation*}
where $\varphi\in C_c^{\infty}(0,T)$ denotes the space of all infinitely many times 
continuously differentiable functions on $(0,T)$ with compact support.
This relation may be written equivalently as
\begin{equation} 
\int_0^T\!\!\!\varphi(t) \langle u'(t), v\rangle  dt =
-\!\!\int_0^T\!\!\!\varphi'(t)\langle u(t),v\rangle dt\ \,
\forall\,v\in V,\varphi\in C_c^{\infty}(0,T).
\label{weakderiv2}
\end{equation} 
Following \cite[Lemma 19.1]{Tartar06}, we introduce the space
\begin{equation*}
W_+=L^2(0,T;V^*)+ L^1(0,T;H),
\end{equation*}
with norm (cf. \cite[Kap.IV, \S 1]{Gajewski:Groeger:Zacharias:74})
\begin{align*}
\|f\|_{W_+}= \inf\{\max(\|g\|_{L^2(0,T;V^{*})},\|h\|_{L^1(0,T;H)}): f=g+h&\\
g \in L^2(0,T;V^{*}),\,h \in L^1(0,T;H)\}&.
\end{align*}
The dual of $W_+$ can be identified with 
\begin{equation*}
W_+^* = L^2(0,T;V) \cap L^{\infty}(0,T;H)
\end{equation*}
equipped with the norm
\begin{equation*}
\|v\|_{W_+^*}= \|v\|_{L^{\infty}(0,T;H)}+ \|v\|_{L^2(0,T;V)}
\end{equation*}
and duality pairing $\wll\cdot,\cdot\wrr:W_+^*\times W_+\to\R$ given by
\begin{equation} \label{dualitypairing}
\wll v, f \wrr
=\int_0^T \Big( \langle g(t), v(t) \rangle + (h(t),v(t)) \Big) dt
\end{equation}
(cf. \cite[Kap.I, \S 5 and Kap.IV, \S 1]{Gajewski:Groeger:Zacharias:74}),
where $v \in W_+^*$ and $f=g+h$ with $g \in L^2(0,T;V^{*})$ and $h \in L^1(0,T;H)$.
We look for solutions of
\eqref{introPDI:all} in the space
\begin{equation*} 
W= \{ u \in L^2(0,T;V): u' \; \text{exists and lies in}\; W_+ \}
\end{equation*}
with norm  given by 
\begin{equation}\label{defnormW}
\|u\|_{W} = \|u\|_{L^2(0,T;V)}+ \|u'\|_{W_+}.
\end{equation}
Indeed, one may show that
\begin{equation} \label{repW}
  W= \{ u\in  W_+^*: u' \in W_+ \}.
  \end{equation}
From \cite[Lemma19.1, p.114]{Tartar06} one further obtains the continuous embedding
\begin{equation} \label{embedW}
W \subseteq C([0,T],H), 
\end{equation}
which shows that the initial condition \eqref{introPDI:IV} makes sense. 
Moreover, by the Lions-Aubin Theorem (see \cite[Lemma 7.7]{Roubicek:05}) and the 
compact embedding of $V$ in $H$, we have for all $1 \le r < \infty$ 
\begin{equation} \label{compactembedW}
W \; \text{is compactly embedded into} \; L^r(0,T;H).
\end{equation}

Next we consider function spaces for the Galerkin approximations.
Since $V_N$ is finite dimensional we need not distinguish topologies in
the image spaces and choose our solutions to be in the space of 
absolutely continuous functions
\[W_N:=\mathrm{AC}([0,T];V_N).\]
Note that $u_N\in W_N$ implies $u_N'(t) \in V_N$ for almost every $t\in(0,T)$ and
\begin{equation} \label{diffsquare}
\frac{1}{2} \frac{d}{dt} \|u_N(t)\|_H^2 = (u'_N(t),u_N(t))\quad\dot\forall t\in(0,T).
\end{equation}
Equation \eqref{introGDI1} may now be written as
\begin{equation} \label{VNcomment2}
(u_N'(t),v)+a(u_N(t),v) = (f_N(t),v)\quad\ \dot\forall t\in(0,T),\ \forall\,v\in V_N.
\end{equation}

\subsection{Problem data}
\label{sec2.3}

We state our main assumptions on $a$ and $F$.
\begin{itemize}
\item [(A1)] The bilinear form $a(\cdot,\cdot):V\times V\rightarrow\R$ is strongly positive and bounded, i.e.\ there
exist constants $c_a, C_a>0$ such that for all $v,w\in V$
 \begin{equation} \label{positivebounded}
c_a\|v\|_{V}^2 \le a(v,v) \quad \text{and} \quad
 a(v,w)\le C_a\|v\|_V\| w \|_V.
\end{equation}
\item [(A2)] The set-valued mapping ${F}:[0,T]\times V\rightarrow\CB(H)$ 
  is Ca\-ra\-th\'{e}o\-do\-ry, i.e.\ the mapping
  $t\mapsto F(t,v):[0,T]\rightarrow \CB(H)$ is  
measurable for any $v\in V$, and for almost every $t\in(0,T)$, the mapping
$v\mapsto F(t,v):(V,\|\cdot\|_V) \rightarrow(\CB(H),\dist_\h(\cdot,\cdot)_H)$
is continuous.
\item [(A3)] There exist a function $\alpha\in L^1(0,T)$ and a constant $c_F>0$ 
such that for almost every $t\in(0,T)$ and all $u,v \in V$, we have bounds
\begin{align*} 
&\|F(t,0)\|_H\le\alpha(t),\\
&\dist_{\h}(F(t,u),F(t,v))_{H}\le c_F(1+\|u\|_V+\|v\|_V)\|u-v\|_H .
\end{align*}
\item[(A4)] The mapping $F$ is relaxed one-sided Lipschitz in its second argument, i.e.\ there exists $\ell\in L^1(0,T)$, 
such that for almost every $t\in(0,T)$, all $v,\tilde v\in V$, and all $g\in{F}(t,v)$,
there exists some $\tilde g\in{F}(t,\tilde v)$ such that
\[(g-\tilde g,v-\tilde v) \le \ell(t)\|v-\tilde v\|_H^2.\]
\item [(A5)] The initial values satisfy $u_{N,0}=P_Nu_0$ for all $N\in\N$.
\end{itemize}
Note that the Lipschitz condition in (A3) is of local type and
implies a stronger continuity property of $v\mapsto F(t,v)$ than (A2),
namely
\begin{align*}
v,v_k\in V, &\; \|v_k\|_V \le C (k \in \N),\; \|v_k-v\|_H \rightarrow 0
\quad \text{as}\quad k\rightarrow \infty \\ 
\Longrightarrow  & \quad \dist_\h(F(t,v_k),F(t,v))_H\rightarrow 0 
\quad \text{as}\quad k\rightarrow \infty.
\end{align*}
Moreover, condition (A3) with $u=0$  implies the growth estimate
\begin{equation} \label{L1bound}
\|{F}(t,v)\|_{H}\le\alpha(t)+c_F (1+\|v\|_V)\|v\|_H  \quad \dot\forall t\in(0,T),\ \forall v\in V.
\end{equation}

\subsection{Main results}

The definition of solutions to \eqref{introPDI:all} is straightforward.

\begin{definition}
A function $u\in W$ is called a solution of \eqref{introPDI:all}
 (or a weak solution of \eqref{introduction:PDI}) if $u$ satisfies
\eqref{introPDI:IV} and there exists 
$f \in L^1(0,T;H)$ satisfying  \eqref{introPDI1}, \eqref{introPDI2} 
for almost every $t \in [0,T]$.
\end{definition}
Similarly, we define a weak solution of \eqref{introGDI:all}.
\begin{definition} Let $N \in \N$ be fixed.
A function $u_N\in W_N$ is called a solution of \eqref{introGDI:all}, 
 (or a Galerkin solution of 
\eqref{introPDI:all})
if $u$ satisfies \eqref{introGDI:IV} and there exists some 
$f_N\in L^1(0,T;H)$ satisfying 
\eqref{introGDI1}, \eqref{introGDI2} for almost every $t\in [0,T]$. 
\end{definition}
Our first result is a uniform a priori bound for solutions of the differential
inclusion and of the  Galerkin inclusion.
\begin{proposition} \label{aprioribound}
There exists a constant $K>0$ such that all solutions
$u\in W$ and $u_N\in W_N$ of inclusions \eqref{introPDI:all} and \eqref{introGDI:all} satisfy 
\begin{equation} \label{estuNu}
\|u\|_W\le K,\quad\|u_N\|_{W}\le K\quad\forall\,N\in\N.
\end{equation}
The constant $K$ depends only on the values $\|u_0\|_H$ and
$\sup_{N\in\N}\|u_{0,N}\|_H$ and on the bounds from (S1)-(S3) and (A1)-(A4).
\end{proposition}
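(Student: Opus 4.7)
The plan is to obtain the $L^\infty(0,T;H)$ and $L^2(0,T;V)$ bounds by a single energy estimate that uses the relaxed one-sided Lipschitz property (A4) at the reference point $\tilde v = 0$, and then to recover the $W$-norm by controlling $u'$ (resp.\ $u_N'$) in $W_+$ via the equation itself.

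First I would test \eqref{introPDI1} with $v = u(t)\in V$ and \eqref{VNcomment2} with $v = u_N(t)\in V_N$. The Galerkin case is immediate from \eqref{diffsquare}; the continuous case invokes the standard chain rule for the Gelfand triple embedding \eqref{embedW}, which gives $\langle u'(t), u(t)\rangle = \tfrac{1}{2}\tfrac{d}{dt}\|u(t)\|_H^2$ for almost every $t$. Together with the coercivity in (A1), both cases yield
\[\frac{1}{2}\frac{d}{dt}\|u(t)\|_H^2 + c_a\|u(t)\|_V^2 \le (f(t), u(t)) \quad \dot\forall\, t \in (0,T).\]
The decisive step is to estimate $(f,u)$ without generating a $\|u\|_V\|u\|_H^2$ contribution. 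Applying (A4) with $\tilde v = 0$, $v = u(t)$, and $g = f(t)\in F(t,u(t))$ furnishes some $\tilde g(t)\in F(t,0)$ with $(f(t)-\tilde g(t),u(t))\le \ell(t)\|u(t)\|_H^2$, while $\|\tilde g(t)\|_H \le \alpha(t)$ by (A3). Hence $(f(t),u(t))\le \alpha(t)\|u(t)\|_H + \ell(t)\|u(t)\|_H^2$. Setting $s(t) := \|u(t)\|_H$, one obtains $s(t)s'(t)\le \ell(t)s(t)^2 + \alpha(t)s(t)$ almost everywhere, and Lemma \ref{extended:Gronwall} with $\kappa = \ell$, $\rho = \alpha$ yields a uniform bound $\|u\|_{L^\infty(0,T;H)} \le C_H$ depending only on $\|u_0\|_H$, $\|\alpha\|_{L^1}$, and $\|\ell\|_{L^1}$; the same argument for $u_N$ uses $\sup_N \|u_{N,0}\|_H$ instead. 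Integrating the energy inequality then gives $\|u\|_{L^2(0,T;V)}, \|u_N\|_{L^2(0,T;V)}\le C_V$.

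For the $W_+$-bound, \eqref{L1bound} together with the $L^\infty(H)$ bound gives $\|f(t)\|_H \le \alpha(t) + c_F C_H + c_F C_H\|u(t)\|_V$, which splits naturally into an $L^1(0,T;H)$ piece and an $L^2(0,T;H) \hookrightarrow L^2(0,T;V^*)$ piece; combined with $\|Au\|_{L^2(0,T;V^*)} \le C_a\|u\|_{L^2(0,T;V)}$, the identity $u' = -Au + f$ then bounds $u'$ in $W_+$. For the Galerkin problem the test space in \eqref{VNcomment2} is only $V_N$, so I would exploit the $H$-orthogonality of $P_N$: since $u_N'(t)\in V_N$, for every $v\in V$
\[\langle u_N'(t),v\rangle = (u_N'(t), P_N v) = (f_N(t),P_N v) - a(u_N(t), P_N v),\]
and the $V$-stability $\|P_N v\|_V \le C_P\|v\|_V$ from (S3) controls the bilinear form contribution by $C_a C_P\|u_N(t)\|_V$ in the $V^*$-norm, while $P_N f_N$ inherits the $L^1 + L^2$ integrability of $f_N$ in $H$. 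Assembling these estimates produces the asserted constant $K$.

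The main obstacle is the estimate of $(f,u)$ in the energy identity. A direct use of the Lipschitz condition (A3) produces $c_F(1+\|u\|_V)\|u\|_H^2$; Young's inequality can absorb the $\|u\|_V$ factor into $c_a\|u\|_V^2$, but only at the cost of leaving a $\|u\|_H^4$ nonlinearity that defeats any Gronwall argument. The one-sided Lipschitz assumption (A4) is precisely what replaces this by the linear-plus-quadratic structure in $\|u\|_H$ that Lemma \ref{extended:Gronwall} can digest. A secondary but essential technical point is the $V$-stability (S3) of $P_N$, without which the $V^*$-control of $u_N'$ in the Galerkin step would fail.
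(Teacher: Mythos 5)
Your proposal is correct and follows essentially the same route as the paper's proof: the energy estimate with (A4) applied at $\tilde v=0$ together with the $\|F(t,0)\|_H\le\alpha(t)$ bound from (A3), Lemma \ref{extended:Gronwall} for the $L^\infty(0,T;H)$ bound, integration for the $L^2(0,T;V)$ bound, and the duality argument with the $H$-orthogonality and $V$-stability (S3) of $P_N$ for the $W_+$-bound on $u_N'$. The only cosmetic difference is that you split $f$ into an $L^1(0,T;H)$ and an $L^2(0,T;V^*)$ part, whereas the paper places all of $f_N$ in $L^1(0,T;H)$ via Cauchy--Schwarz in time; both are valid in $W_+$.
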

The proof will show that the same bounds hold for all
solutions of inclusions \eqref{introPDI:all} and \eqref{introGDI:all} on any subinterval $[0,T']$ with $0<T'\le T$.
Note also that the bound in $W$  implies a bound in $L^{\infty}(0,T;H)$
according to \eqref{repW} and \eqref{embedW}.

From now on we denote by $\mc{S}=\mc{S}(u_0)$ the set of all solutions of \eqref{introPDI:all} and by $\mc{S}_N=\mc{S}_N(P_N u_0)$ the set of all
solutions of \eqref{introGDI:all} with initial data given by (A5). By
Proposition \ref{aprioribound} and (S2) these sets are uniformly bounded for
$u_0$ fixed.

For our second main result, we prove existence of solutions to \eqref{introGDI:all}
and use the bounds from Proposition \ref{aprioribound} to extract a weakly convergent
subsequence.
The limit of this sequence is a solution to \eqref{introPDI:all}.
This is essentially sufficient to conclude convergence of $\mc{S}_N$  
to $\mc{S}$ in the upper Kuratowski sense in $L^2(0,T;H)$. 

\begin{theorem} \label{upper:Kuratowski}
The solution sets $\mc{S}$ and $\mc{S}_N$ for all $N\in\N$ are non\-emp\-ty, 
and we have $\Limsup_{N\to\infty}\mc{S}_N\subset\mc{S}$ in $L^2(0,T;H)$.
\end{theorem}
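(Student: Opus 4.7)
The plan is: first show $\mathcal{S}_N\neq\emptyset$ by reducing \eqref{introGDI:all} to a finite-dimensional differential inclusion and invoking Theorem \ref{existencemultiODE}; next use Proposition \ref{aprioribound} together with the compact embedding \eqref{compactembedW} to extract a convergent subsequence of Galerkin solutions together with their right-hand-side selections $(u_N,f_N)$; and finally identify the limit $(u,f)$ as a solution pair in $\mathcal{S}$. Since $u^\ast\in\Limsup_{N\to\infty}\mathcal{S}_N$ in $L^2(0,T;H)$ is by definition the $L^2(0,T;H)$-limit of some $u_{N_k}\in\mathcal{S}_{N_k}$, this construction simultaneously yields non-emptiness of $\mathcal{S}$ and the upper Kuratowski inclusion.

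For the Galerkin existence, fix $N$ and an $H$-orthonormal basis $\{\phi_i\}_{i=1}^{d_N}$ of $V_N$. Writing $u_N=\sum_i c_i\phi_i$, inclusion \eqref{introGDI:all} is equivalent to a finite-dimensional inclusion $c'(t)\in G(t,c(t))$ in $\R^{d_N}$, where $G(t,c)$ is the coordinate image of $\{P_N(w-Au):w\in F(t,u)\}$ with $u=\sum_i c_i\phi_i$. Closedness, boundedness, and convexity of the images, measurability of $t\mapsto G(t,c)$, and upper semicontinuity of $c\mapsto G(t,c)$ all follow from (A2) and the finite dimensionality; Proposition \ref{aprioribound}, applied on every subinterval $[0,T']$, supplies the a priori bound required in hypothesis (iii), so Theorem \ref{existencemultiODE} produces a solution $u_N\in W_N$.

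For the limit extraction, pick any $u_N\in\mathcal{S}_N$; by Proposition \ref{aprioribound} the sequence is bounded in $W$, and \eqref{compactembedW} together with the weak compactness of bounded sets in $L^2(0,T;V)$ and weak-$\ast$ compactness in $L^\infty(0,T;H)$ give a subsequence with $u_N\to u$ strongly in $L^2(0,T;H)$ and a.e.\ pointwise, $u_N\rightharpoonup u$ in $L^2(0,T;V)$, and $u\in W$. The growth bound \eqref{L1bound} combined with the uniform $W$-bound yields
\[\|f_N(t)\|_H\le\alpha(t)+c_F(1+\|u_N(t)\|_V)\|u_N(t)\|_H,\]
so $\{\|f_N(\cdot)\|_H\}$ is equi-integrable (the $\alpha$-part is a fixed $L^1$ function and the rest is dominated in $L^2$); by Dunford--Pettis, $f_N\rightharpoonup f$ weakly in $L^1(0,T;H)$ along a further subsequence. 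To identify $f(t)\in F(t,u(t))$, apply (A3) to get
\[\dist(f_N(t),F(t,u(t)))_H\le c_F(1+\|u_N(t)\|_V+\|u(t)\|_V)\,\|u_N(t)-u(t)\|_H,\]
whose $L^1(0,T)$-norm tends to $0$ by Cauchy--Schwarz, so on a further subsequence the left-hand side vanishes a.e. Mazur's lemma produces convex combinations $g_N=\sum_{k=N}^{M_N}\lambda_k^Nf_k\to f$ strongly in $L^1(0,T;H)$, hence pointwise a.e.\ along a subsequence; projecting each $f_k(t)$ onto the convex closed set $F(t,u(t))$ and exploiting convexity, I get $\dist(g_N(t),F(t,u(t)))_H\le\sup_{k\ge N}\dist(f_k(t),F(t,u(t)))_H\to 0$, which combined with $g_N(t)\to f(t)$ forces $f(t)\in F(t,u(t))$ for a.e.\ $t$.

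To pass to the limit in the equation, fix $v\in V$ and choose $v_N\in V_N$ with $v_N\to v$ in $V$ via (S2). Testing \eqref{VNcomment2} with $v_N$, multiplying by $\phi\in C^1([0,T])$ with $\phi(T)=0$, integrating by parts, and passing to the limit using the weak convergences of $u_N$ and $f_N$, the boundedness of $a$, and $u_N(0)=P_Nu_0\to u_0$ in $H$ (which follows from $H$-density of $\bigcup_N V_N$), gives
\[-(u_0,v)\phi(0)-\int_0^T(u,v)\phi'\,dt+\int_0^Ta(u,v)\phi\,dt=\int_0^T(f,v)\phi\,dt;\]
comparing with the integration-by-parts identity for $u\in W\hookrightarrow C([0,T];H)$ yields both \eqref{introPDI1} and $u(0)=u_0$, so $u\in\mathcal{S}$. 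The main obstacle is the identification step: the set-valued right-hand side forces me to work in the weak $L^1(0,T;H)$-topology, which requires the Dunford--Pettis equi-integrability argument for $f_N$, and the closedness step must combine a time-dependent pointwise Hausdorff bound with Mazur's lemma in a way that preserves membership in the moving sets $F(t,u(t))$; the other steps are standard once (S2), (S3) and the bounds of Proposition \ref{aprioribound} are available.
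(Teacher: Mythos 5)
Your proposal is correct, and it follows the same overall architecture as the paper (a priori bounds, compact embedding \eqref{compactembedW}, Dunford--Pettis for $f_N$, a Mazur-type identification of the limit selection, passage to the limit in the weak form), but it deviates in three places worth noting. First, you prove nonemptiness of $\mc{S}_N$ by reducing \eqref{introGDI:all} directly to a finite-dimensional inclusion with right-hand side built from $P_NF$ and invoking Theorem \ref{existencemultiODE}; the paper instead gets this from its Filippov-type Proposition \ref{Filippov} applied with $v_N=0$, whose proof constructs the refined map $F_N(t,v)=P_NF(t,v)\cap L_N(t,v)$ so that the same ODE existence theorem simultaneously yields the quantitative estimates needed later for Theorem \ref{lower:Kuratowski}. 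Your route is a legitimate, leaner special case of that machinery, but be aware of two points the paper handles explicitly: closedness of $P_NF(t,v)$ is not automatic for a continuous linear image of a closed set and requires the weak-compactness/Mazur argument, and $Au\in V^*$ is not in the domain of the $H$-orthogonal projection, so your set should read $\{P_Nw-A_Nu:w\in F(t,u)\}$ with the discrete operator $A_N$ defined by $(A_Nv,w)=a(v,w)$; also, recovering a selection $f_N(t)\in F(t,u_N(t))$ (rather than in $P_NF(t,u_N(t))$) from the ODE solution needs a measurable selection step, since \eqref{introGDI1} only tests against $V_N$. Second, for the identification $f(t)\in F(t,u(t))$ you use the pointwise version of Mazur's lemma (convex combinations of tails converging a.e., plus convexity of the moving sets), whereas the paper uses the same distance estimate \eqref{fNdistest} but packages the conclusion abstractly via weak closedness of the convex sets $\F_\epsilon$; these are interchangeable. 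Third, you recover the equation and the initial condition in one sweep with test functions $\phi\in C^1([0,T])$, $\phi(T)=0$, while the paper first derives \eqref{introPDI1} with $\varphi\in C_c^\infty(0,T)$ and nested test spaces, and then treats $u(0)=u_0$ separately with the test function $t\mapsto\tfrac{T-t}{T}v_M$; your ordering should be tightened slightly (first conclude $u'\in W_+$, hence $u\in W$, from the interior test functions, and only then compare boundary terms), and your equi-integrability justification ("dominated in $L^2$") is vaguer than needed --- the uniform $L^\infty(0,T;H)$ bound from Proposition \ref{aprioribound} gives it directly. None of these is a genuine gap; they are points of rigor that the paper's proof makes explicit.
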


Our second main result may be viewed as convergence in the lower Kuratowski 
sense in $W_+^*$. 

\begin{theorem} \label{lower:Kuratowski}
For every solution $u\in\mc{S}$, there exists a sequence $\{u_N\}_{N\in\N}$ with
$u_N\in\mc{S}_N$ for all $N\in\N$ and 
\[\|u-u_N\|_{W_+^*} \rightarrow 0\quad\text{as}\quad N\rightarrow\infty.\]
\end{theorem}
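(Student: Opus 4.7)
The plan is, for a given solution $u\in\mc{S}$ with selection $f\in L^1(0,T;H)$, to construct Galerkin solutions $u_N\in\mc{S}_N$ whose selections $f_N(t)\in F(t,u_N(t))$ are forced to be compatible with the relaxed one-sided Lipschitz pairing against $(u(t),f(t))$. Once this is achieved, a test-function argument in $V_N$ combined with the refined Gronwall Lemma~\ref{extended:Gronwall} will deliver convergence in $W_+^*$.

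\textbf{Constrained selection and Galerkin existence.} I would introduce the set-valued map
\[ F^*(t,w) := \bigl\{\, g \in F(t,w) : (f(t)-g,\,u(t)-w) \le \ell(t)\|u(t)-w\|_H^2 \,\bigr\}.\]
Condition (A4), applied with $v=u(t)$, $\tilde v=w$ and $g=f(t)\in F(t,u(t))$, gives $F^*(t,w)\neq\emptyset$ for a.e.\ $t$ and every $w\in V$; the defining inequality is affine in $g$, so $F^*(t,w)\in\CB(H)$. The multifunction $w\mapsto F^*(t,w)$ has closed graph on $V_N$ (by (A2) and continuity of the separating linear functional), hence is upper semicontinuous, and $t\mapsto F^*(t,w)$ is measurable as the intersection of the measurable $F(\cdot,w)$ with a measurably varying closed half-space. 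Hypotheses (i)-(ii) of Theorem~\ref{existencemultiODE} are then met for the finite-dimensional inclusion
\[ u_N'(t) + A_N u_N(t) \in P_N F^*(t,u_N(t)), \qquad u_N(0) = P_N u_0,\]
with $A_N$ the Galerkin realisation of $a$, while (iii) is inherited from Proposition~\ref{aprioribound} since every such $u_N$ also solves \eqref{introGDI:all}. This yields $u_N\in\mc{S}_N$ with selection $f_N(t)\in F^*(t,u_N(t))$.

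\textbf{Energy estimate in $V_N$.} Set $\tilde e_N(t):=P_Nu(t)-u_N(t)\in V_N$. Subtracting the two equations and testing with $\tilde e_N(t)\in V_N$ yields
\[\tfrac{1}{2}\tfrac{d}{dt}\|\tilde e_N\|_H^2 + a(\tilde e_N,\tilde e_N) = -a(u-P_Nu,\tilde e_N) + (f-f_N,\tilde e_N).\]
The first term on the right is absorbed via Young's inequality against the $c_a\|\tilde e_N\|_V^2$ term from (A1), at the cost of an $L^1$ remainder proportional to $\|u-P_Nu\|_V^2$. Splitting $(f-f_N,\tilde e_N)=(f-f_N,u-u_N)-(f-f_N,u-P_Nu)$, the constraint defining $F^*$ produces
\[(f-f_N,u-u_N)\le\ell(t)\|u-u_N\|_H^2\le 2\ell^+(t)\bigl(\|\tilde e_N\|_H^2+\|u-P_Nu\|_H^2\bigr),\]
while the cross term integrates to $0$ by dominated convergence: the envelope $\|f\|_H+\|f_N\|_H$ is in $L^1(0,T)$ uniformly in $N$ thanks to \eqref{L1bound} and Proposition~\ref{aprioribound}, and $\|u(t)-P_Nu(t)\|_H\to 0$ pointwise with $L^\infty$-envelope $2\|u(t)\|_H$. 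Lemma~\ref{extended:Gronwall} applied with $\kappa=2\ell^+$ then forces $\|\tilde e_N\|_{L^\infty(0,T;H)}+\|\tilde e_N\|_{L^2(0,T;V)}\to 0$.

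\textbf{Upgrading to $u_N\to u$, and the main obstacle.} Since $u\in C([0,T];H)$ by \eqref{embedW} the trajectory is $H$-compact, so a standard uniform-approximation argument on compacts combined with (S2) and density of $\bigcup V_N$ in $H$ gives $\|P_Nu-u\|_{L^\infty(0,T;H)}\to 0$; likewise $\|P_Nu-u\|_{L^2(0,T;V)}\to 0$ follows from (S2) pointwise and dominated convergence with envelope $(1+C_P)\|u\|_V\in L^2(0,T)$. Combining with the previous step gives $\|u-u_N\|_{W_+^*}\to 0$, as required. The main obstacle is the regularity verification for $F^*$ in Step~1, since the constraining half-space depends on $t$ through the only measurably given data $(u,f,\ell)$ and on $w$ through a non-smooth modulus; if direct verification proves awkward, one can approximate $f$ in $L^1(0,T;H)$ by simple functions, run the construction level-wise, and pass to the limit using Proposition~\ref{aprioribound} for compactness.
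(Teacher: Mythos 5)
Your argument is sound and reaches the stated conclusion, but it takes a genuinely different route from the paper. The paper proceeds in two modular steps: it first approximates $u$ by the solution $v_N$ of the \emph{linear} Galerkin problem with right-hand side $u'+Au$ (Lemma~\ref{Zeidler}, which outsources $\|v_N-u\|_{W_+^*}\to 0$ to a textbook result), then bounds the defect $\delta_N(t)=\dist(v_N'(t)+A_Nv_N(t),F(t,v_N(t)))_H$ by $\|P_Nf(t)-f(t)\|_H+\dist(f(t),F(t,v_N(t)))_H$ and feeds it into the Filippov-type Proposition~\ref{Filippov} to obtain $u_N\in\mc{S}_N$ with $\|u_N-v_N\|_{W_+^*}\le C_\ell\|\delta_N\|_{L^1(0,T)}\to 0$; note that (A3) is used \emph{quantitatively} there, in the defect estimate \eqref{Fdistest}. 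You instead inline the constrained-selection mechanism that drives the proof of Proposition~\ref{Filippov}, anchoring the half-space constraint directly at the exact data $(u,f)$ rather than at an auxiliary $v_N$, and close with a direct energy/Gronwall estimate for $\tilde e_N=P_Nu-u_N$. What this buys: (A3) enters your proof only through regularity (measurability, closed graph, integrable bounds for $F^*$), not through a quantitative Hausdorff-distance estimate, which makes your route arguably more robust, e.g.\ closer to what the weakened condition (A3') of Section~\ref{sec:extension} requires. The price is that you must redo the measurability/upper-semicontinuity verification for $F^*$ and control the projection cross-terms $a(u-P_Nu,\tilde e_N)$ and $(f-f_N,u-P_Nu)$, which the paper's decomposition $u-u_N=(u-v_N)+(v_N-u_N)$ organizes away; the paper's route also reuses Proposition~\ref{Filippov}, which is needed anyway for nonemptiness of $\mc{S}_N$.

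Three smaller points. First, the ``main obstacle'' you flag at the end is not one: the paper's proof of Proposition~\ref{Filippov} carries out exactly the verification you worry about, with the merely measurable anchor data $t\mapsto(v_N(t),\Proj(v_N'(t)+A_Nv_N(t),F(t,v_N(t)))_H,\ell(t))$ in place of $(u,f,\ell)$; the same tools (\cite[Theorems 8.2.4, 8.2.8, 8.2.9 and Proposition 1.4.9]{Aubin:Frankowska:90}, plus Mazur's theorem for closedness of projected images) apply verbatim, so no simple-function approximation is needed. Second, distinguish the selection $f_N(t)\in F^*(t,u_N(t))\subseteq H$ from the ODE right-hand side: the finite-dimensional inclusion you solve is $u_N'+A_Nu_N\in P_NF^*(t,u_N)$, so you must recover a measurable selection $f_N$ with $P_Nf_N=u_N'+A_Nu_N$ (a measurable-selection step the paper also leaves implicit); your energy estimate then goes through precisely because the test function $\tilde e_N$ lies in $V_N$, whence $(f_N,\tilde e_N)=(P_Nf_N,\tilde e_N)$ and the one-sided constraint can be applied after your splitting. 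Third, Lemma~\ref{extended:Gronwall} is stated for $s(t)s'(t)\le\kappa(t)s(t)^2+\rho(t)s(t)$, whereas your remainder is additive in $\|\tilde e_N\|_H^2$ rather than multiplied by $\|\tilde e_N\|_H$; apply instead the classical Gronwall inequality to $y_N=\|\tilde e_N\|_H^2$, which needs no division by $s$ and works since $y_N(0)=\|P_Nu_0-P_Nu_0\|_H^2=0$ and the collected remainders tend to $0$ in $L^1(0,T)$ (for the cross term, use the uniform $L^1$ bound on $\|f\|_H+\|f_N\|_H$ from \eqref{L1bound} and Proposition~\ref{aprioribound} together with $\|u-P_Nu\|_{L^\infty(0,T;H)}\to 0$, which indeed follows from the $H$-compactness of the trajectory as you indicate). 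With these repairs your proof is complete and yields the same $W_+^*$-convergence as the paper's.
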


As a consequence of Theorems \ref{upper:Kuratowski} and \ref{lower:Kuratowski},
we obtain the next main result of this paper.

\begin{theorem} \label{main:theorem}
The sets $\mc{S}_N$ converge to $\mc{S}$ in the sense that
\[\dist_{\h}(\mc{S},\mc{S}_N)_{L^2(0,T;H)}\rightarrow 0\quad\text{as}\quad N\rightarrow\infty.\]
\end{theorem}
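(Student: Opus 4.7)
The plan is to assemble the two Kuratowski-type statements (Theorems \ref{upper:Kuratowski} and \ref{lower:Kuratowski}) into full Kuratowski convergence in $L^2(0,T;H)$, and then upgrade this to Hausdorff convergence via Lemma \ref{Kuratowski:lemma}, using Proposition \ref{aprioribound} together with the compact embedding \eqref{compactembedW} to supply the required relatively compact set.

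First I would translate Theorem \ref{lower:Kuratowski} into convergence in $L^2(0,T;H)$. Fix $u\in\mc{S}$ and let $\{u_N\}_{N\in\N}$ with $u_N\in\mc{S}_N$ be the sequence provided by Theorem \ref{lower:Kuratowski}, so $\|u-u_N\|_{W_+^*}\to 0$. Since $\|v\|_{W_+^*}=\|v\|_{L^\infty(0,T;H)}+\|v\|_{L^2(0,T;V)}$ and $\|v\|_{L^2(0,T;H)}\le T^{1/2}\|v\|_{L^\infty(0,T;H)}$, this gives $\|u-u_N\|_{L^2(0,T;H)}\to 0$. Hence $\dist(u,\mc{S}_N)_{L^2(0,T;H)}\to 0$, which means $u\in\Liminf_{N\to\infty}\mc{S}_N$ in $L^2(0,T;H)$. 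Combining with Theorem \ref{upper:Kuratowski}, we obtain
\[
\Limsup_{N\to\infty}\mc{S}_N\subseteq\mc{S}\subseteq\Liminf_{N\to\infty}\mc{S}_N
\quad\text{in }L^2(0,T;H),
\]
i.e.\ $\Lim_{N\to\infty}\mc{S}_N=\mc{S}$ in $L^2(0,T;H)$.

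Next I would produce a common relatively compact container. By Proposition \ref{aprioribound}, there exists $K>0$ such that $\mc{S}\subseteq B$ and $\mc{S}_N\subseteq B$ for all $N\in\N$, where $B:=\{w\in W:\|w\|_W\le K\}$. By the Lions--Aubin-based compactness result \eqref{compactembedW}, $W$ is compactly embedded into $L^2(0,T;H)$, so $B$ is relatively compact in $L^2(0,T;H)$.

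Finally I would apply Lemma \ref{Kuratowski:lemma} with $X=L^2(0,T;H)$, $M_n=\mc{S}_N$, $M=\mc{S}$, and the set $B$ just defined. The hypotheses of the lemma are met by the two previous steps, and its conclusion is exactly
\[
\dist_{\h}(\mc{S}_N,\mc{S})_{L^2(0,T;H)}\to 0\quad\text{as }N\to\infty,
\]
which is the claim. There is no serious obstacle here beyond bookkeeping: all the analytic content is already packaged in Theorems \ref{upper:Kuratowski} and \ref{lower:Kuratowski}, Proposition \ref{aprioribound}, and Lemma \ref{Kuratowski:lemma}. The only point requiring a moment of care is the passage from the $W_+^*$-convergence in Theorem \ref{lower:Kuratowski} to $L^2(0,T;H)$-convergence, which is immediate since $L^\infty(0,T;H)\hookrightarrow L^2(0,T;H)$ on the bounded interval $[0,T]$.
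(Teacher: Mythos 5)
Your proposal is correct and follows essentially the same route as the paper's own proof: combine Theorems \ref{upper:Kuratowski} and \ref{lower:Kuratowski} into Kuratowski convergence $\Lim_{N\to\infty}\mc{S}_N=\mc{S}$ in $L^2(0,T;H)$, take the ball $B\subset W$ from Proposition \ref{aprioribound} as a common container that is relatively compact in $L^2(0,T;H)$ by \eqref{compactembedW}, and apply Lemma \ref{Kuratowski:lemma}. Your explicit verification that $W_+^*$-convergence implies $L^2(0,T;H)$-convergence via $\|v\|_{L^2(0,T;H)}\le T^{1/2}\|v\|_{L^\infty(0,T;H)}$ is a small bookkeeping step the paper leaves implicit, not a different argument.
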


\section{A priori estimates}

Our first step is an a priori bound for solutions of the differential inclusions 
\eqref{introPDI:all}
and \eqref{introGDI:all}.

\begin{proof}[Proof of Proposition \ref{aprioribound}]
 Recall that estimate \eqref{estuNu} requires to show 
\begin{equation} \label{estuNprime}
  \| u_N \|_{L^2(0,T;V)} \le K_0, \quad
  \| u'_N \|_{L^2(0,T;V^*)+ L^1(0,T;H)} \le K'_0
\end{equation}
for suitable constants $K_0,K'_0$ depending on $C_0:=\sup_{N\in \N}\|u_{N,0}\|_H$.
If $u_N\in \mathrm{AC}([0,T];V_N)$ solves inclusion \eqref{introGDI:all}, 
then according to \eqref{VNcomment2}, 
there exists a function $f_N \in L^1(0,T;H)$ with
\begin{align}
&(u'_N(t),v)+a(u_N(t),v)=(f_N(t),v)\quad\dot\forall t\in(0,T),\,\forall v\in V_N,
\label{weakfN}\\
&f_N(t) \in F(t,u_N(t))\quad\dot\forall t\in(0,T). \label{finclude}
\end{align}
Setting $v =u_N(t)$ and using (A1) and \eqref{diffsquare}
gives the energy estimate
\begin{equation}\label{energy1est}
\frac{1}{2} \frac{d}{dt} \|u_N(t)\|_H^2 + c_a \|u_N(t)\|_V^2 \le 
 (f_N(t),u_N(t))\quad\dot\forall\,t\in (0,T).
\end{equation}
Next we apply condition (A4) for fixed $t\in[0,T]$ 
with $v=u_N(t)$, $\tilde{v}=0$, $g=f_N(t)$
and find an element $\tilde{g} \in F(t,0)$ such that
\[( f_N(t)-\tilde{g},u_N(t) ) \le \ell(t) \|u_N(t)\|_H^2.\]
When combined with \eqref{L1bound} and \eqref{energy1est}, we obtain
\begin{equation} \label{energy2est}
\frac{1}{2} \frac{d}{dt} \|u_N(t)\|_H^2 + c_a \|u_N(t)\|_V^2 \le
\ell(t) \|u_N(t)\|_H^2 + \alpha(t) \|u_N(t)\|_H.
\end{equation}
Then the Gronwall Lemma \ref{extended:Gronwall} leads to the
estimate for $t \in [0,T]$
\begin{equation} \label{Gronwall1} 
\begin{aligned}
\|u_N(t)\|_H \le & \|u_{N,0}\|_He^{\int_0^{t}\ell(\tau) d\tau}
+ \int_0^t\alpha(\sigma)e^{\int_{\sigma}^t \ell(\tau) d\tau}d\sigma \\ 
\le & e^{\|\ell\|_{L^1(0,T)}} ( C_0 + \| \alpha \|_{L^1(0,T)} )
=: K_1.
\end{aligned}
\end{equation}
This proves $\|u_N\|_{L^{\infty}(0,T;H)} \le K_1$.  
In the next step we use this estimate and integrate \eqref{energy2est} to obtain
\begin{equation} \label{L2vestimate}
c_a \|u_N\|^2_{L^2(0,T;V)} \le \frac{C_0^2}{2} + K_1(K_1\|\ell\|_{L^1[0,T]}
+ \| \alpha \|_{L^1[0,T]})=:C_1,
\end{equation} 
so that the first part of \eqref{estuNprime} follows with $K_0^2=\frac{C_1}{c_a}$.
We use the duality relation 
\begin{equation*}
\| u'_N\|_{W_+}= \sup_{\varphi \in W_+^*,\,\|\varphi\|_{W_+^*=1}}\wll \varphi, u'_N \wrr
\end{equation*}
to estimate the derivative $u'_N \in L^1(0,T;V_N)$.
Because of \eqref{dualitypairing} and \eqref{weakfN}, we find 
\begin{equation} \label{energy3est}
\begin{aligned}
\wll\varphi, u'_N \wrr = & 
 \int_0^T (\varphi(t),u'_N(t)) dt 
= \int_0^T (u'_N(t),P_N \varphi(t)) dt \\
= & - \int_0^T a(u_N(t),P_N \varphi(t))dt + \int_0^T(f_N(t),P_N \varphi(t))dt.
\end{aligned}
\end{equation}
By \eqref{L1bound} and \eqref{L2vestimate}, we arrive at the estimate
\begin{equation} \label{fNest}
  \begin{aligned}
    \| f_N \|_{L^1(0,T;H)} \le& \|\alpha\|_{L^1(0,T)} +
    c_F(\sqrt{T}+ \|u_N\|_{L^2(0,T;V)})\|u_N\|_{L^2(0,T;H)}\\
    \le& \|\alpha\|_{L^1(0,T)}+c_Fc_{VH}(\sqrt{T}+K_0)K_0=:C_2.
    \end{aligned}
    \end{equation}
Further, using (A1) and $\|P_N \varphi(t) \|_V \le C_P \| \varphi(t)\|_V$ from 
(S3), we deduce from \eqref{energy3est} and \eqref{fNest} that
\begin{equation*}
\begin{aligned}
  |\wll\varphi,u'_N\wrr|  \le & 
  \int_0^T\!\!\! C_a C_P \|u_N(t)\|_V  \|\varphi(t)\|_V dt 
  \!+\! \int_0^T\!\! \|f_N(t)\|_H \|P_N \varphi(t)\|_H dt \\
\le & C_a C_P  K_0 \|\varphi\|_{L^2(0,T;V)}+ 
C_2 \|\varphi\|_{L^{\infty}(0,T;H)} 
\le K'_0 \|\varphi\|_{W_+^*}
\end{aligned}
\end{equation*}
with $K_0':=C_aC_PK_0+C_2$, which proves our assertion.

\medskip

The bound for $\mc{S}$ can be obtained essentially in the same way,
because according to \cite[Ch. 20]{Tartar06}, functions $u \in W$ satisfy
\begin{equation*}
\frac{1}{2} \frac{d}{dt} \|u(t)\|_H^2 = \langle u'(t),  u(t) \rangle
\quad\dot\forall\,t\in (0,T).
\end{equation*}
\end{proof}

\section{Existence of Galerkin solutions}

The following Filippov-type result measures the minimal distance from a given $v_N\in W_N$
to $\mc{S}_N$.
It is convenient to introduce the operator
\[A_N:V_N\to V_N,\quad (A_Nv,w)=a(v,w)\quad\forall\,v,w\in V_N.\]

\begin{proposition} \label{Filippov}
For any $v_N\in W_N$, any $u_{N,0}\in V_N$, and any function $\delta_N\in L^1(0,T)$ 
with
\begin{equation*}
\dist(v_N'(t)+A_Nv_N(t),F(t,v_N(t)))_H\le\delta_N(t)\quad\dot\forall\,t\in(0,T),
\end{equation*}
there exists a solution $u_N\in W_N$ of \eqref{introGDI:all} satisfying
\begin{equation} \label{estfilippov1}
\|v_N-u_N\|_{L^\infty(0,T;H)}
\le C_{\ell}(\|v_N(0)-u_{N,0}\|_H+\|\delta_N\|_{L^1(0,T)}),
\end{equation}
\begin{equation}
  \mbox{}\hspace{-2ex}
  \begin{aligned}
    c_a &\|v_N-u_N\|_{L^2(0,T;V)}^2 \le \|\delta_N\|_{L^1(0,T)} |v_N-u_N\|_{L^\infty(0,T;H)}
    \\+ &
\|\ell_+\|_{L^1(0,T)}\|v_N-u_N\|_{L^\infty(0,T;H)}^2 +
 \|v_N(0)-u_{N,0}\|_H^2,
\end{aligned}
\label{estfilippov2}
\end{equation}
with $C_{\ell} = \sup_{0\le s \le t \le T}\exp\left(\int_s^t \ell(\tau) d\tau\right)$
and $\ell_+(t) = \max(0,\ell(t))$.
\end{proposition}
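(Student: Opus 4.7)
The strategy is to apply the finite-dimensional existence result, Theorem \ref{existencemultiODE}, to a carefully restricted right-hand side whose elements automatically satisfy the relaxed one-sided Lipschitz pairing against a canonical defect selection for $v_N$, so that the quantitative estimates then reduce to a Gronwall argument on $w_N := v_N-u_N$.

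First I would produce a measurable defect selection $g_N:[0,T]\to H$ with $g_N(t)\in F(t,v_N(t))$ and $\|v_N'(t)+A_Nv_N(t)-g_N(t)\|_H\le\delta_N(t)$. Because $F$ is Carath\'eodory by (A2) and $t\mapsto v_N(t)$ is continuous, the composition $t\mapsto F(t,v_N(t))$ is measurable with closed, convex values in $H$; the metric projection of $v_N'(t)+A_Nv_N(t)$ onto this set is single-valued and preserves measurability, and realizes the distance $\delta_N(t)$ by definition. Next I would introduce the restricted multimap
\[F^*(t,u):=\{f\in F(t,u):(g_N(t)-f,v_N(t)-u)_H\le\ell(t)\|v_N(t)-u\|_H^2\},\]
which by (A4) applied with $g=g_N(t)$ and $\tilde v=u$ has nonempty, closed, convex values. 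Its $t$-measurability follows from intersecting $F(\cdot,u)$ with a measurable affine half-space, while upper semicontinuity of $F^*(t,\cdot)$ on the finite-dimensional space $V_N$ is obtained by a closed-graph argument: if $u_n\to u$ in $V_N$ and $f_n\in F^*(t,u_n)$, then (A3) bounds $\{f_n\}$, (A2) together with Hausdorff continuity of $F(t,\cdot)$ yields a limit $f\in F(t,u)$ along a subsequence, and the pairing inequality passes to the limit.

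I would then apply Theorem \ref{existencemultiODE} on $[0,T]$ to
\[u_N'(t)\in-A_Nu_N(t)+F^*(t,u_N(t)),\quad u_N(0)=u_{N,0},\]
using Proposition \ref{aprioribound}, applied on every subinterval $[0,T']$, as the required a priori bound; this is legitimate because $F^*\subseteq F$, so any solution of the restricted inclusion is a solution of \eqref{introGDI:all} with the same initial datum. Setting $f_N(t):=u_N'(t)+A_Nu_N(t)$ produces a measurable selection $f_N(t)\in F^*(t,u_N(t))\subseteq F(t,u_N(t))$, which shows that $u_N$ is a solution of \eqref{introGDI:all} and, crucially, that the ROSL pairing holds pointwise between $g_N$ and $f_N$.

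The estimates then follow from a standard energy calculation. Writing $v_N'+A_Nv_N=g_N+r_N$ with $\|r_N(t)\|_H\le\delta_N(t)$, one has $w_N'+A_Nw_N=r_N+(g_N-f_N)$; testing against $w_N(t)\in V_N$, using \eqref{diffsquare}, the coercivity $(A_Nw_N,w_N)=a(w_N,w_N)\ge c_a\|w_N\|_V^2$, Cauchy--Schwarz on the defect term, and the pairing bound on $g_N-f_N$ yields
\[\tfrac12\tfrac{d}{dt}\|w_N(t)\|_H^2+c_a\|w_N(t)\|_V^2\le\delta_N(t)\|w_N(t)\|_H+\ell(t)\|w_N(t)\|_H^2.\]
Dropping the $V$-term and applying Lemma \ref{extended:Gronwall} with $s(t)=\|w_N(t)\|_H$, $\kappa=\ell$, $\rho=\delta_N$ gives \eqref{estfilippov1} after taking the supremum in $t$ and bounding the exponential factors by $C_\ell$. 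Integrating the full inequality over $[0,T]$, discarding $\tfrac12\|w_N(T)\|_H^2\ge 0$, and controlling the sign-indefinite term by $\int_0^T\ell(t)\|w_N(t)\|_H^2\,dt\le\|\ell_+\|_{L^1(0,T)}\|w_N\|_{L^\infty(0,T;H)}^2$ produces \eqref{estfilippov2}.

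The main obstacle is the closed-graph/upper semicontinuity verification for $F^*$ in Step 2: it cannot be read off from (A2) alone, because one must simultaneously pass to the limit in a selection $f_n\in F(t,u_n)$ \emph{and} in a pairing constraint whose quadratic term depends on $u_n$. The argument relies essentially on the finite-dimensionality of $V_N$ (to extract convergent subsequences of $f_n$ from the local boundedness furnished by (A3)) together with the fact that the pairing is jointly continuous in $(f,u)$, which ensures that the restricted inclusion lies within the scope of Theorem \ref{existencemultiODE}.
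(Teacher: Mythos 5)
Your overall architecture (defect selection by metric projection, restriction of the right-hand side by the one-sided Lipschitz half-space, existence via Theorem \ref{existencemultiODE} with the a priori bound of Proposition \ref{aprioribound}, then the energy/Gronwall estimates) is the paper's architecture, and your final estimates are correct and identical to the paper's. But there is a genuine gap at the existence step: your restricted map $F^*(t,u)\subseteq F(t,u)$ takes values in $H$, whereas Theorem \ref{existencemultiODE} is a strictly finite-dimensional result for $G:[0,T]\times\R^N\to\CB(\R^N)$, so it does not apply to $u_N'\in -A_Nu_N+F^*(t,u_N)$. Worse, the unprojected inclusion is generically unsolvable in $W_N$: an absolutely continuous $V_N$-valued solution satisfies $u_N'(t)+A_Nu_N(t)\in V_N$, so it would force $F^*(t,u_N(t))\cap V_N\neq\emptyset$, and there is no reason for the $\CB(H)$-valued map $F$ to meet $V_N$ at all. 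The Galerkin problem \eqref{introGDI:all} only equates against test functions in $V_N$, i.e.\ it reads $u_N'+A_Nu_N=P_Nf_N$ with $f_N(t)\in F(t,u_N(t))$; hence the correct finite-dimensional right-hand side is the \emph{projected} one. This is precisely the step the paper devotes most of its proof to: it works with $F_N(t,v)=P_NF(t,v)\cap L_N(t,v)$, proving that $P_NF$ has closed (hence compact in $V_N$) convex images via Mazur's theorem, is measurable, and that the intersection is nonempty by (A4), measurable, and upper semicontinuous. Relatedly, your closed-graph argument is flawed as stated: you invoke finite-dimensionality of $V_N$ to extract convergent subsequences of the selections $f_n$, but your $f_n$ lie in $H$, where (A3)-boundedness yields only \emph{weak} sequential compactness; and upper semicontinuity of an $H$-valued map with merely closed bounded values is not what Theorem \ref{existencemultiODE} consumes.

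The repair is cheap and lands you exactly on the paper's proof. Since the test vector $v_N(t)-u$ lies in $V_N$ and $P_N$ is the $H$-orthogonal projection, $(g_N(t)-P_Nf,\,v_N(t)-u)=(g_N(t)-f,\,v_N(t)-u)$, so projecting does not disturb your half-space constraint: $P_NF^*(t,u)\subseteq P_NF(t,u)\cap L_N(t,u)$ with your $g_N$ in the role of the paper's $f$. After this replacement the values are compact convex subsets of the finite-dimensional space $V_N$, upper semicontinuity follows as in the paper from \cite[Proposition 1.4.9]{Aubin:Frankowska:90}, and Theorem \ref{existencemultiODE} applies. One residual bookkeeping point: a solution of the projected inclusion yields $f_N:=u_N'+A_Nu_N\in P_NF(t,u_N)$, so to verify \eqref{introGDI2} one should pass to a measurable selection $\phi_N(t)\in F(t,u_N(t))$ with $P_N\phi_N=f_N$; then $(\phi_N(t),v)=(f_N(t),v)$ for all $v\in V_N$ gives \eqref{introGDI1}, while $f_N\in L_N$ is what feeds the pairing term in your (correct) energy estimate. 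Your concluding Gronwall and integration steps then go through unchanged.
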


\begin{proof}
We construct a multivalued right-hand side in such a way that any
solution of the corresponding differential inclusion is also 
a solution of the Galerkin inclusion \eqref{introGDI:all}
and satisfies bounds \eqref{estfilippov1} and \eqref{estfilippov2}.
Then we invoke Theorem \ref{existencemultiODE} to ensure the 
existence of such a solution.

Since $V_N$ is finite-dimensional, there exists $\beta_N>0$ such that 
$\|v\|_V\le\beta_N\|v\|_H$ for all $v\in V_N$.
Hence the 
operator $A_N:V_N\to V_N$
is continuous with bound $\|A_Nv\|_H\le\beta_N C_a\|v\|_V$.
By \cite[Theorem 8.2.8]{Aubin:Frankowska:90}, the mapping $t\mapsto F(t,v_N(t))$ 
is measurable, and \cite[Corollary 8.2.13]{Aubin:Frankowska:90} guarantees
measurability of the function
\[f:[0,T]\to H,\quad f(t):=\Proj(v_N'(t)+A_Nv_N(t),F(t,v_N(t)))_H.\]

Consider the mapping $L_N:[0,T]\times V_N\rightrightarrows H$ given by
\[L_N(t,v):=\{g\in H: (f(t)-g,v_N(t)-v)\le\ell(t)\|v_N(t)-v\|_H^2\}.\]
By \cite[Theorem 8.2.9]{Aubin:Frankowska:90}, the mapping $t\mapsto L_N(t,v)$ 
is measurable for all $v\in V$, and it is easy to see that $v\mapsto L_N(t,v)$ 
has closed graph in $(V_N,\|\cdot\|_V)\times(H,\|\cdot\|_H)$ for almost every
$t\in(0,T)$.
In particular, its images are closed, and they are convex by construction.

\medskip

Now consider the map $P_NF:[0,T]\times V_N\rightrightarrows V_N$.
Since the projection $P_N$ is linear and has operator norm $1$ w.r.t. $\| \cdot \|_H$, the mapping $P_NF$ 
has convex images and inherits growth bound \eqref{L1bound} and continuity in $v$ from $F$.
By \cite[Theorem 8.2.8]{Aubin:Frankowska:90}, the mapping $t\mapsto P_NF(t,v)$ 
is measurable for all $v\in V_N$.

To show that $P_NF(t,v)$ is $H$-closed for almost every $t\in(0,T)$ and all $v\in V_N$,
fix $t\in(0,T)$ and $v\in V_N$  and consider a sequence
$\{g_k\}_{k\in\N}\subset P_NF(t,v)$ 
with $\lim_{k\to\infty}\|g_k-g\|_H=0$ for some $g \in H$.
There exists$\{f_k\}_{k\in\N}\subset F(t,v)$ such that $g_k=P_Nf_k$ for all $k\in\N$.
By Mazur's theorem, and since $F(t,v)\in\mc{CBC}(H)$, there exist $\tilde{f}\in F(t,v)$ and 
a subsequence $\N'\subset\N$ such that $f_k\rightharpoonup \tilde{f}$ in $H$ as $\N'\ni k\to\infty$.
As a consequence, we have $g_k=P_Nf_k\rightharpoonup P_N\tilde{f}$ in $H$ as $\N'\ni k\to\infty$,
so uniqueness of the weak limit yields  $g=P_N\tilde{f}\in P_NF(t,v)$.

Since $\dim V_N<\infty$, the images $P_NF(t,v)$ are compact in $H$ for almost every 
$t\in(0,T)$ and all $v\in V_N$.

\medskip

Finally, the mapping
\[F_N:[0,T]\times V_N\rightrightarrows H,\quad F_N(t,v):=P_NF(t,v)\cap L_N(t,v),\]
has nonempty images for almost every $t\in(0,T)$ according to assumption (A4).
By the above, we have $F_N(t,v)\in\mc{CBC}(H)$ for almost every $t\in(0,T)$ 
and every $v\in V_N$, and $F_N$ inherits growth bound \eqref{L1bound} from $P_NF$.
By \cite[Theorem 8.2.4]{Aubin:Frankowska:90}, the mapping $t\mapsto F_N(t,v)$ 
is measurable for all $v\in V$, and by \cite[Proposition 1.4.9]{Aubin:Frankowska:90},
the mapping $v\mapsto F_N(t,v)$ is upper semicontinuous for almost every $t\in(0,T)$.

\medskip

In particular, the right-hand side of the differential inclusion
\begin{equation} \label{fdodi}
u_N'(t)\in F_N(t,u_N(t))-A_Nu_N(t)\quad\dot\forall\,t\in(0,T),\quad u_N(0)=u_{N,0},
\end{equation}
is measurable in time and upper semicontinuous in the second argument with compact and convex images
in $(V_N,\|\cdot\|_H)$.
For any $u_N\in \mathrm{AC}([0,T];V_N)$ solving inclusion \eqref{fdodi}, the function
\[f_N:[0,T]\to H,\quad f_N(t):=u_N'(t)+A_Nu_N(t),\]
is an element of $ L^1(0,T;H)$. 
Hence $u_N$ and $f_N$ solve the Galerkin inclusion \eqref{introGDI:all},
and Proposition \ref{aprioribound} provides an a priori bound for $u_N$.

Now all assumptions of Theorem \ref{existencemultiODE} are satisfied, so there exists 
indeed a solution $u_N\in \mathrm{AC}([0,T];V_N)$ of the inclusion \eqref{fdodi}, and hence
of inclusion \eqref{introGDI:all}.
Using the definition of $f$ and $L_N$ we
estimate the distance between $u_N$ and $v_N$ as follows
\begin{align}
& \frac{1}{2}\frac{d}{dt}\|v_N(t)-u_N(t)\|_H^2 + c_a \|v_N(t)-u_N(t)\|_V^2\nonumber\\
&\le(v'_N(t) - u'_N(t), v_N(t) - u_N(t))
+ a(v_N(t)-u_N(t),v_N(t)-u_N(t))\nonumber\\
&=\left((v_N'(t)+A_Nv_N(t)-f(t))+(f(t)-f_N(t)),v_N(t)-u_N(t)\right)\nonumber\\
&\le \delta_N(t) \|v_N(t)-u_N(t)\|_H + \ell(t)\|v_N(t)-u_N(t)\|^2_H.\label{energygeneral}
\end{align}
By Lemma \ref{extended:Gronwall}, we obtain 
\begin{equation*} 
\|v_N(t)-u_N(t)\|_H \le e^{\int_0^t\ell(s)ds}\|v_N(0)-u_N(0)\|_H
+\int_0^t\!e^{\int_s^t \ell(\tau) d\tau}\delta_N(s)ds,
\end{equation*}
which yields \eqref{estfilippov1}.
Integrating inequality \eqref{energygeneral} yields estimate \eqref{estfilippov2}.
\end{proof}

\section{Convergence of solution sets}

In the following, we prove upper and lower Kuratowski convergence of the
sets $\mc{S}_N$ of Galerkin solutions to the set $\mc{S}$, from which we
deduce Hausdorff convergence.

\subsection{Upper limit of Galerkin solution sets}

In this section, we show that the set $\mc{S}$ of all exact solutions
contains the upper Kuratowski limit of the sets $\mc{S}_N$ in $L^2(0,T;H)$.

\begin{proof}[Proof of Theorem \ref{upper:Kuratowski}]
\emph{a) The sets $\mc{S}_N$ are nonempty.}\\
This follows from Proposition \ref{Filippov} applied to the 
function $v_N=0$, since (A3) implies
\[\delta_N(t):=\dist(0,F(t,0))_H\le\|F(t,0)\|_H\le\alpha(t).\]
\noindent\emph{b) Extraction of convergent subsequences.}\\
Let $\{u_N\}_{N\in\N}$ be a sequence with $u_N\in\mc{S}_N$ for all $N\in\N$.
Consider a subsequence $\N'\subset\N$, and let
$f_N\in L^1(0,T;H)$ satisfy \eqref{introGDI1} and \eqref{introGDI2} for all $N\in\N'$. By Proposition \ref{aprioribound} the sequence $(u_N)_{N \in \N}$ is
bounded in $W$. Therefore,
by the compact embedding \eqref{compactembedW}, there exist
a subsequence $\N''\subseteq\N'$ and some $u\in L^2(0,T;H)$ satisfying
\begin{equation} \label{subsequence:converge}
\|u_N-u\|_{L^2(0,T;H)}\to 0\quad\text{as}\quad\N''\ni N\to\infty.
\end{equation}
According to Proposition \ref{aprioribound},
there exists a subsequence $\N'''\subseteq \N''$
such that $(u_N)_{N\in \N'''}$ is weak-$*$ convergent 
in $L^2(0,T;V)\cap L^{\infty}(0,T;H)$ (cf. \cite[Ch.21.8]{Zeidler:2A}). 
Due to the uniqueness of the weak-$*$ limit, we have
\begin{equation} \label{uNweakstar}
u_N \overset{*}{\rightharpoonup} u \in L^2(0,T;V) \cap L^{\infty}(0,T;H)\quad
\text{as}\quad\N'''\ni N\rightarrow\infty.
\end{equation}
The estimates \eqref{L1bound} and \eqref{estuNu}
show that $\{f_N\}_{N\in\N'''}$ is bounded in $L^1(0,T;H)$. 
By the Dunford-Pettis theorem \cite[Thm. IV 2.1, p. 101]{Diestel:Uhl:98}
the convergence \eqref{subsequence:converge} ensures uniform integrability
with respect to the Lesbesgue measure $\mu$, i.e. for any Lebesgue measurable
set $E \subseteq (0,T)$ one has
\[\sup_{N\in\N'''}\int_E\|u_N(t)\|_H^2dt\to 0\quad\text{as}\quad\mu(E)\to 0.\]
Using bounds \eqref{L1bound} and \eqref{estuNu}, we obtain
for $\mu(E)\to 0$
\begin{align*}
&\sup_{N\in\N'''}\|\int_Ef_N(t)dt\|_H
\le\sup_{N\in\N'''}\int_E\Big(\alpha(t)+c_F(1+\|u_N(t)\|_V)\|u_N(t)\|_H\Big)dt\\
&\le\int_E \alpha(t)dt+c_F(\sqrt{T}+K)\sup_{N\in\N'''}(\int_E\|u_N(t)\|_H^2
   dt)^\frac12
\to 0.
\end{align*}
 Therefore, the set $\{f_N:N \in \N''' \}$ is uniformly
integrable and bounded in $L^1(0,T;H)$, and balls in $H$ are relatively
weakly compact.
Then, again by the Dunford-Pettis theorem, there exists a subsequence 
$\N^{(4)}\subset\N'''$ such that
\begin{equation} \label{fweakconv}
f_N\rightharpoonup f\ \text{in}\ L^1(0,T;H)\quad\text{as}\quad\N^{(4)}\ni N\rightarrow 
\infty.
\end{equation}

\noindent\emph{c) The limits solve the differential inclusion.}\\
By assumption (A2) and estimate \eqref{L1bound}, the sets
\begin{align*}
&\F:=\{g\in L^1(0,T;H):g(t)\in F(t,u(t))\ \dot\forall\,t\in [0,T]\},\\
&\F_\epsilon:=\{g\in L^1(0,T;H): \dist(g,\F)_{L^1(0,T;H)} \le 
\epsilon\},
\end{align*}
are closed, bounded and convex subsets of $L^1(0,T;H)$. Note that
the map $t \mapsto F(t,u(t))$ is measurable due to \cite[Theorem 8.2.8]{Aubin:Frankowska:90}, so that measurability of the map $t \mapsto \dist(g(t),F(t,u(t))$ follows from \cite[Corollary 8.2.13]{Aubin:Frankowska:90} by the continuity
assumption of (A2). Further,
Assumption (A3), the bound \eqref{estuNu}
and statement \eqref{subsequence:converge} imply
\begin{equation} \label{fNdistest}
  \begin{aligned}
&\int_0^T\dist(f_N(t),F(t,u(t)))_Hdt\\
&\le 2c_F(\sqrt{T}+K)\|u_N-u\|_{L^2(0,T;H)}\to 0\quad\text{as}\quad\N^{(4)}\ni N\to\infty.
  \end{aligned}
  \end{equation}
Hence for every $\epsilon>0$
there exists $N_0(\epsilon)$ such that 
$f_N\in\F_\epsilon$ for all $N\in N^{(4)}$ with $N\ge N_0(\epsilon)$.
By Mazur's theorem, the set $\F_\epsilon$ is weakly closed in $L^1(0,T;H)$, 
so that \eqref{fweakconv} implies $f\in\F_\epsilon$. 
Since this holds for every $\epsilon>0$, we have proved $f\in\F$
and hence inclusion \eqref{introPDI2}.

Since the Galerkin spaces $V_N$ are nested, we obtain from \eqref{introGDI:all}
and \eqref{weakderiv2} for all $v_M \in V_M$, all $\varphi\in C_c^{\infty}(0,T)$
and all $N \ge M$ that
\begin{equation*} 
\int_0^T(u_N(t),v_M)\varphi'(t)dt=\int_0^T\Big(a(u_N(t),v_M)\varphi(t)
-(f_N(t),v_M)\varphi(t)\Big)dt.
\end{equation*}
Let $N \rightarrow \infty$ in this equality, and 
use the convergences \eqref{subsequence:converge}, \eqref{uNweakstar}
and \eqref{fweakconv} for $v_M \varphi'\in L^2(0,T;H)$,
$v_M \varphi \in L^2(0,T;V)$, $v_M \varphi \in L^{\infty}(0,T;H)$,
to obtain
\begin{equation*} 
\int_0^T(u(t),v_M)\varphi'(t)dt=\int_0^T\Big(a(u(t),v_M)\varphi(t)
-(f(t),v_M)\varphi(t)\Big)dt.
\end{equation*}
Since $\bigcup_{M \in \N} V_M$ is dense in $V$, we have 
\begin{equation*} 
\int_0^T (u(t),v)\varphi'(t) dt = \int_0^T \Big(a(u(t),v)
-(f(t),v)\Big)\varphi(t)dt
\end{equation*}
for all $v \in V$ and $\varphi \in C_c^{\infty}(0,T)$.
Hence $u$ has a weak derivative in $W_+=L^2(0,T,V^*)+ L^1(0,T;H)$,
given by the two terms on the right-hand side. Thus we have shown that $u\in W$ satisfies
statements \eqref{introPDI1} and \eqref{introPDI2}.

\medskip

\noindent\emph{d) The solution assumes the initial data}\\
It remains to verify that the initial value condition 
\eqref{introPDI:IV} is satisfied.
Consider again $N \ge M$ and an arbitrary element $v_M \in V_M$.
Using the weak differentiability of $u$ and the absolute
continuity of $u_N$, we obtain
\begin{align*}
-&(u_{N,0}-u(0),v_M) =  \int_0^T \frac{d}{dt}\Big[ (u_N(t)-u(t),\tfrac{T-t}{T}v_M)
\Big] dt \\
= & \int_0^{T} (u_N'(t),\tfrac{T-t}{T}v_M) - \langle u'(t),\tfrac{T-t}{T}v_M \rangle 
- \frac{1}{T} (u_N(t)-u(t),v_M)dt\\
= & \int_0^T\Big(-a(u_N(t)-u(t),\tfrac{T-t}{T}v_M)+ 
(f_N(t)-f(t),\tfrac{T-t}{T}v_M)\Big) dt \\
- &  \int_0^T \frac{1}{T} (u_N(t)-u(t),v_M)dt. 
\end{align*}
In the last step we used the differential inclusions
\eqref{introPDI1} and \eqref{introGDI1}.
Equation \eqref{subsequence:converge} shows that the last term  
converges to $0$ as $\N''\ni N\to\infty$.
The first two integral terms converge to zero due to the weak$*$-convergence 
\eqref{uNweakstar}, the weak convergence \eqref{fweakconv}
and the fact that the test function $t \mapsto \tfrac{T-t}{T}v_M$
is in $ L^2(0,T;V)\cap L^{\infty}(0,T;H)$.
From assumption (A5), we conclude
\begin{equation*}
(u_0-u(0),v_M)=0\quad\forall\,v_M \in V_M.
\end{equation*}
Now the density of $\bigcup_{M} V_M$ in $H$ shows $u(0)=u_0$.

\medskip

\noindent\emph{e) Upper Kuratowski convergence of $\mc{S}_N$ to $\mc{S}$}\\
If $\Limsup_{N\rightarrow\infty}\mc{S}_N \subseteq \mc{S}$ is false in $L^2(0,T;H)$, 
there exist a subsequence $\N'\subseteq\N$, elements $u_N\in\mc{S}_N$ for all $N\in\N'$, 
and some $u\in L^2(0,T;H)$ with $u\notin\mc{S}$ and $\|u_N-u\|_{L^2(0,T;H)}\rightarrow 0$.
But by the above, there exist a subsequence $\N''\subseteq\N'$ and some $\tilde u\in\mc{S}$
such that $\|u_N-\tilde u\|_{L^2(0,T;H)}\rightarrow 0$ as $N\rightarrow\infty$ in $\N''$,
which is a contradiction.
\end{proof}

\subsection{Lower limit of Galerkin solution sets}

In this section we show that solutions in $\mc{S}$ are limits of
solutions in $\mc{S}_N$ with respect to the norm of 
$L^2(0,T;V)\cap L^{\infty}(0,T;H)$.
In principle, we project every $u\in\mc{S}$ to $W_N$ and
invoke Proposition \ref{Filippov} to generate a Galerkin solution
close to $u$.

\begin{lemma} \label{Zeidler}
For any $u\in W$ and $N\in\N$, there exists a unique solution $v_N\in W_N$ 
of the linear problem
\begin{align*}
&(v_N'(t),v)+a(v_N(t),v)=\langle u'(t),v\rangle+a(u(t),v) 
\quad\dot\forall t\in(0,T),\,\forall v \in V_N,\\
&v_N(0) = P_Nu(0),
\end{align*}
and the sequence $\{v_N\}_{N\in\N}$ of these solutions satisfies
\begin{align} 
\|v_N-u \|_{W_+^{\star}}\to 0\quad\text{as}\quad N\to\infty. \label{QN:pointwise}
\end{align}
\end{lemma}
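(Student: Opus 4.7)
The plan is to treat existence/uniqueness and convergence separately. For existence and uniqueness, I reduce the Galerkin equation to a linear ODE on $V_N$: fixing a basis $\{\phi_1,\ldots,\phi_{d_N}\}$ and writing $v_N(t)=\sum_j c_j(t)\phi_j$, the equation becomes $Mc'(t)+Sc(t)=b(t)$, where $M$ is the (invertible) Gram matrix, $S$ is the stiffness matrix, and $b_i(t)=\langle u'(t),\phi_i\rangle+a(u(t),\phi_i)$. Splitting $u'=g+h$ with $g\in L^2(0,T;V^*)$, $h\in L^1(0,T;H)$ shows $b\in L^1(0,T;\R^{d_N})$, so Carath\'eodory theory for linear ODE systems yields a unique $v_N\in W_N$.

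For the convergence $v_N\to u$ in $W_+^*$, I introduce the auxiliary function $p_N(t):=P_Nu(t)$. A short preliminary computation shows that $p_N\in W_N$ with derivative characterized by $(p_N'(t),v)=\langle u'(t),v\rangle$ for all $v\in V_N$: one defines an $L^1(0,T;V_N)$-representative $w_N$ via $(w_N(t),v):=\langle u'(t),v\rangle$ (the norm equivalence on the finite-dimensional space $V_N$ makes $w_N$ integrable) and verifies $p_N(t)-p_N(s)=\int_s^t w_N(\tau)\,d\tau$ using the Gelfand-triple identity $u(t)-u(s)=\int_s^t u'$. Subtracting this identity from the equation for $v_N$ yields the error equation
\[
(e_N'(t),v) + a(e_N(t),v) = a(u(t)-p_N(t),v)\qquad\forall\,v\in V_N,
\]
with $e_N:=v_N-p_N$ and $e_N(0)=0$. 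Testing with $v=e_N(t)$, applying (A1), Young's inequality, and integrating in time gives
\[
\tfrac{1}{2}\|e_N\|_{L^\infty(0,T;H)}^2 + \tfrac{c_a}{2}\|e_N\|_{L^2(0,T;V)}^2 \;\le\; \tfrac{C_a^2}{2c_a}\,\|u-p_N\|_{L^2(0,T;V)}^2,
\]
so that the triangle inequality reduces the whole statement to showing $\|u-p_N\|_{W_+^*}\to 0$.

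The $L^2(0,T;V)$-part of this is routine: using $P_Nw=w$ for $w\in V_N$ together with (S3), one obtains $\|u(t)-P_Nu(t)\|_V\le (1+C_P)\,\dist(u(t),V_N)_V$, which vanishes pointwise by (S2) and is dominated by $(1+C_P)\|u(t)\|_V\in L^2(0,T)$. The main obstacle, in my view, is the $L^\infty(0,T;H)$-part: from $u\in W\subset C([0,T],H)$ via \eqref{embedW} the trajectory $u([0,T])$ is compact in $H$; the operators $P_N$ are $H$-contractions and converge strongly to the identity on $V$, hence on all of $H$ by density. A standard equicontinuity argument then upgrades pointwise to uniform convergence on the compact set $u([0,T])$, yielding $\|u-p_N\|_{L^\infty(0,T;H)}\to 0$ and completing the proof.
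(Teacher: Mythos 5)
Your proposal is correct, but it takes a genuinely different route from the paper: the paper's entire proof of Lemma \ref{Zeidler} is a two-line citation, observing that $v_N$ is the standard Galerkin approximation of the linear equation $w'+Aw=f$ with data $f:=u'+Au\in L^2(0,T;V^*)+L^1(0,T;H)$ and referring to \cite[Theorem 23.A]{Zeidler:2A} for existence, uniqueness, and the convergence \eqref{QN:pointwise}. You instead prove everything from scratch: existence and uniqueness via reduction to a linear ODE system with $L^1$ right-hand side, and convergence via the error equation for $e_N=v_N-P_Nu$, an energy estimate based on (A1) and Young's inequality, and the two approximation facts $\|u-P_Nu\|_{L^2(0,T;V)}\to 0$ (quasi-optimality $\|u-P_Nu\|_V\le(1+C_P)\dist(u,V_N)_V$ from (S3), then (S2) and dominated convergence) and $\|u-P_Nu\|_{L^\infty(0,T;H)}\to 0$ (uniform convergence of the $H$-contractions $P_N$ on the compact trajectory $u([0,T])\subset H$). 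The decisive structural step — the identity $((P_Nu)'(t),v)=\langle u'(t),v\rangle$ for $v\in V_N$ via the $V_N$-Riesz representative $w_N\in L^1(0,T;V_N)$ — is exactly what makes the error equation close, and your treatment of it is sound, including the absorption of the $L^1(0,T;H)$-part of $u'$ through the norm equivalence on the finite-dimensional space $V_N$. As for what each approach buys: your argument is quantitative, bounding $\|v_N-u\|_{W_+^*}$ by a constant times the best-approximation error of the trajectory, and it handles data $u'\in W_+$ seamlessly, whereas the cited textbook theorem is formulated for $f\in L^2(0,T;V^*)$, so the paper implicitly relies on its (routine) extension to $W_+$ data; conversely, your route uses the $V$-stability (S3) of $P_N$, which the citation route does not need — though since (S3) is a standing assumption of the paper, nothing is lost here. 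One trivial repair: in your combined energy inequality the $L^\infty(0,T;H)$- and $L^2(0,T;V)$-terms should each be bounded separately by the right-hand side (the time at which the supremum is attained need not be $T$, so the two left-hand terms do not add up against a single right-hand side as written); this only changes the constant in $\|e_N\|_{W_+^*}\le C\|u-P_Nu\|_{L^2(0,T;V)}$ and not the conclusion.
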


\begin{proof}
This is the usual Galerkin approximation of $w'+Aw=f$ in $V_N$ 
with $f\in L^2(0,T;V^*)+L^1(0,T;H)$ given by 
$f(t):=u'(t)+Au(t)$ for almost every $t\in(0,T)$. 
Existence and uniqueness of a solution as well as
convergence \eqref{QN:pointwise} are shown in \cite[Theorem 23.A]{Zeidler:2A}.
\end{proof}

Now we prove lower Kuratowski convergence of $\mc{S}_N$ to $\mc{S}$.

\begin{proof}[Proof of Theorem \ref{lower:Kuratowski}]
Let $u\in\mc{S}$ be a solution and let $f\in L^1(0,T;H)$ be as in
\eqref{introPDI1} and \eqref{introPDI2}.
For all $N\in\N$ and $v_N$ as in Lemma \ref{Zeidler}, we have
\begin{align*}
&(v'_N(t),v)+a(v_N(t),v)
=\langle u'(t),v\rangle+a(u(t),v)\\
&=(f(t),v)
=(P_Nf(t),v)\quad \dot\forall\,t\in(0,T),\,\forall\,v\in V_N.
\end{align*}
Therefore, the distance
\[\delta_N(t):=\dist(v_N'(t)+A_Nv_N(t),F(t,v_N(t)))_H\]
satisfies
\begin{align*}
  \delta_N(t) \le \|P_N f(t) - f(t) \|_H + \dist(f(t),F(t,v_N(t)))_H.
\end{align*}
The first term converges to $0$ pointwise by assumption (S2) and has
the integrable bound $\|f(t)\|_H, t \in (0,T)$, hence
$\| P_Nf - f \|_{L^1(0,T;H)} \to 0$ as $N \to \infty$ by Lebesgue's theorem.
The second term is estimated by invoking assumption (A3)
\begin{equation} \label{Fdistest}
\begin{aligned}
& \int_0^T\dist(f(t),F(t,v_N(t)))_H dt \le
\int_0^T\dist(F(t,u(t)),F(t,v_N(t)))_Hdt\\
&\le c_F(T+\|u\|_{L^2(0,T;V)}+\|v_N\|_{L^2(0,T;V)})\|u-v_N\|_{L^2(0,T;H)},
\end{aligned}
\end{equation}
which converges to $0$ as $N \to \infty$ due to \eqref{QN:pointwise}.
Hence, by Proposition~\ref{Filippov}, there exist solutions
$u_N\in W_N (N\in\N)$ of inclusion \eqref{introGDI:all}, 
with $u_{N,0}= P_N u(0)$ which satisfy
\begin{align*}
\|u_N-v_N\|_{W_+^*} \le C_{\ell} \|\delta_N\|_{L^1(0,T)}
\to 0\quad\text{as}\quad N\to\infty.
\end{align*}
Finally, statement \eqref{QN:pointwise} implies
\begin{equation*}
\|u-u_N\|_{W_+^{\star}} \le \|u-v_N\|_{W_+^{\star}} + \|v_N-u_N\|_{W_+^{\star}}  
\to 0\quad\text{as}\quad N\to\infty.
\end{equation*}
\end{proof}

\begin{remark*}
Note that  convergence in Theorem \ref{lower:Kuratowski}
holds in the strong norm of $L^2(0,T;V)\cap L^{\infty}(0,T;H)$,
 while Theorem \ref{upper:Kuratowski} assures convergence only in the
weaker norm of $L^2(0,T;H)$ (or in $L^r(0,T;H)$ with a fixed $r \in [1,\infty)$). 
\end{remark*}

\subsection{Hausdorff convergence of solution sets}

We finally combine upper and lower Kuratowski convergence to prove convergence 
with respect to the Hausdorff metric in $L^2(0,T;H)$.

\begin{proof}[Proof of Theorem  \ref{main:theorem}]
Theorems \ref{upper:Kuratowski} and \ref{lower:Kuratowski} imply Kuratowski convergence
$\Lim_{N\rightarrow\infty}\mc{S}_N=\mc{S}$ in $L^2(0,T;H)$.
According to Proposition \ref{aprioribound}, 
there exists a ball $B\subset W$ with $\mc{S}\subset B$ and $\mc{S}_N\subset B$ 
for all $N\in\N$.
Therefore, invoking Lemma \ref{Kuratowski:lemma} with $Y=L^2(0,T;H)$ yields 
the desired convergence statement, because $B$ is relatively compact in $L^2(0,T;H)$ 
by the compact embedding \eqref{compactembedW}.
\end{proof}

\subsection{An extension}
\label{sec:extension}
Condition (A3) seriously limits the polynomial growth of nonlinearities
to which Theorems \ref{upper:Kuratowski} and \ref{lower:Kuratowski}
apply, see Section \ref{sec:example}. Therefore, we present in this section a weaker assumption
under which our conclusions still hold. The type of condition
is similar to \cite[Assumption B]{emmrich2009} where it is used
to derive a priori estimates for variable time-step discretizations of
evolution equations.

Instead of (A3) we require \\
(A3') There exist functions $\alpha\in L^1(0,T)$, $b \in C(\R)$
and constants $\beta \in [0,2)$, $\gamma \in (0,1]$ 
such that for almost every $t\in(0,T)$ and all $u,v \in V$, the following
estimates hold
\begin{align*} 
&\|F(t,0)\|_H\le\alpha(t),\\
 & \dist_{\h}(F(t,u),F(t,v))_{H}\le b(\|u\|_H+\|v\|_H)
  (1+\|u\|_V^{\beta}+\|v\|_V^{\beta})\|u-v\|_H^{\gamma} .
\end{align*}
For $\beta=\gamma=1$ and a constant function $b$ this coincides with condition (A3).
Without loss of generality we assume the function $b$ to be
nonnegative and monotone increasing.
In the following we indicate the changes in the proofs of our results without
providing all details.

The growth estimate \eqref{L1bound} now reads
\begin{equation} \label{newL1bound}
  \| F(t,v) \|_H \le \alpha(t) + b(\|v\|_H)(1+\|v\|_V^{\beta}) \|v\|_H^{\gamma}
  \quad \dot\forall t \in (0,T), \; \forall v \in  V.
\end{equation}
The modified $L^1$-estimate \eqref{fNest} is obtained from \eqref{newL1bound},
\eqref{Gronwall1} and \eqref{L2vestimate}  via  H\"older's inequality as follows
\begin{equation} \label{newfNest}
  \|f_N \|_{L^1(0,T;H)} \le \|\alpha\|_{L^1(0,T)} + b(K_1)K_1^{\gamma}(T +
  K_0^{\beta}T^{1 - \frac{\beta}{2}}).
  \end{equation}
In a similar way, the $L^1$-bound and uniform integrability of the
sequence $f_N \in L^1(0,T;H)$ from part b) of the proof of  Theorem
\ref{upper:Kuratowski} follows from \eqref{newL1bound}
\begin{equation*} 
  \sup_{N \in \N'''} \| \int_E f_N(t) dt \|_H \le
  \int_E \alpha(t) dt + b(K) K^{\gamma}(\mu(E) + K^{\beta}\mu(E)^{1 - \frac{\beta}{2}}).
\end{equation*}
Moreover, since the compact embedding \eqref{compactembedW} holds for
all $1 \le r < \infty$ we select the subsequence $\N''$ such that
 one has, instead of \eqref{subsequence:converge}
\begin{equation} \label{newsubsequence:converge}
  \|u_N-u\|_{L^r(0,T;H)}\to 0\quad\text{as}\quad\N''\ni N\to\infty, \quad
  r:= \frac{2 \gamma}{2 - \beta}.
\end{equation}
Using H\"older's inequality 
the estimate \eqref{fNdistest} is replaced by
\begin{equation*} 
  \begin{aligned}
& \int_0^T\dist(f_N(t),F(t,u(t)))_Hdt  \\  
    \le &\, b(2 K) \left\{ \int_0^T (1 + \|u_N(t)\|_V^{\beta}+\|u(t)\|_V^{\beta})^{\frac{2}{\beta}} dt \right\}^{\frac{\beta}{2}}\|u_N - u\|_{L^r(0,T;H)}^{\gamma}  \\
    \le  & \, C b(2K) (1 + \|u_N \|_{L^2(0,T;V)} + \|u(t)\|_{L^2(0,T;V)})^{\beta}
    \|u_N - u\|_{L^r(0,T;H)}^{\gamma}
\end{aligned}
\end{equation*}
with a suitable constant $C$. Finally, with $r$ from
\eqref{newsubsequence:converge}, the estimate \eqref{Fdistest} is modified
in a similar way:
\begin{equation*}
  \begin{aligned}
    & \int_0^T \dist(F(t,u(t)),F(t,v_N(t))_H dt \\
    \le & C b(2 \|u\|_{W_+^*}) (1+ \|u\|_{L^2(0,T;V)} +
    \|v_N\|_{L^2(0,T;V)})^{\beta} \|u - v_N\|_{L^r(0,T;H)}^{\gamma}.
  \end{aligned}
  \end{equation*}
    
\section{Example}
\label{sec:example}
Let $\Omega\subset\R^1$ be a bounded open 
interval,
and consider the Gelfand triple $V\subseteq H\subseteq V^*$ 
with spaces $V=H^1_0(\Omega)$, $H=L^2(\Omega)$ and $V^*=H^{-1}(\Omega)$,
which satisfy assumption (A1).
Let $(V_N)_{N\in\N}\subset V$ be spaces of piecewise linear functions
subject to successively refined equidistant grids.
This standard construction is known to satisfy (S2) and under some additional conditions also satisfies (S3), see \cite{carstensen02}.

Consider the partial differential inclusion
\[u'(t)-\Delta u(t)\in F(u(t)).\]
It is well-known that $-\Delta:V\to V^*$ induces a bilinear form which 
satisfies assumption (A1).
By the Sobolev embedding theorem, we have $V\subset L^\infty(\Omega)$, 
and there exists $C_\infty>0$ with
\[\|v\|_{L^\infty(\Omega)}\le C_\infty\|v\|_V\quad\forall\,v\in V.\]
The function
\[g:\R\to\R,\quad g(\eta)=\eta(1-|\eta|) \quad\forall\,\eta\in\R,\] 
clearly does not have linear growth.
It satisfies
\[(\int_\Omega g(v(x))^2dx)^\frac12
\le \|v\|_{L^4(\Omega)}^2 + \|v\|_{L^2(\Omega)},\]
and hence induces a Nemytskii operator 
\[N_g:L^4(\Omega)\to L^2(\Omega),\quad N_g(v)(x):=g(v(x)).\]
In view of \cite[Theorem 3.4.4]{Gasinski:Papageorgiou:06}, the operator
$N_g$ is continuous.
Let $h\in L^2(\Omega)$ be nonnegative.
Then the multivalued nonlinearity
\begin{align*}
  F &:[0,T]\times V\to\CB(H), \\
  F(t,v)& =\{f\in \M(\Omega;\R): |N_g(v)(x)- f(x)| \le h(x)
 \; \dot\forall \; x\in \Omega \}
\end{align*}
 does not depend on $t$ and clearly satisfies the
measurability condition of (A2). Continuity will follow from
(A3) which we verify next.
Since
\[\|F(t,0)\|_{L^2(\Omega)}=  \|h\|_{L^2(\Omega)},\]
the first inequality holds with $\alpha(t)\equiv\ \|h\|_{L^2(\Omega)}$.
For $\xi,\eta\in\R$ we have
\[|g(\xi)-g(\eta)|= |\xi-\eta+|\xi|(\eta-\xi)+\eta(|\eta|-|\xi|)| \le (1+|\xi|+|\eta|)|\xi-\eta|,
\]
hence for $u,v \in L^2(\Omega)$
\begin{align*}
&\dist(F(t,u),F(t,v))_H^2
\le\int_{\Omega}|g(u(x))-g(v(x))|^2dx\\
&\le\int_{\Omega}(1+|u(x)|+|v(x)|)^2(u(x)-v(x))^2dx\\
&\le(1+\|u\|_{L^\infty(\Omega)}+\|v\|_{L^\infty(\Omega)})^2\|u-v\|_{L^2(\Omega)}^2\\
& \le \max(1,C_\infty)^2(1+\|u\|_V+\|v\|_V)^2\|u-v\|_H^2.
\end{align*}
Therefore, the second inequality of assumption (A3) holds with \newline $c_F=\max(1,C_\infty)$.
One easily verifies that
\[(g(\xi)-g(\eta))(\xi-\eta)\le (\xi-\eta)^2\quad\forall\,\xi,\eta \in\R,\]
and it follows that
\begin{align*} (N_g(u)-N_g(v),u-v)&=\int_\Omega (g(u(x))-g(v(x)))(u(x)-v(x))dx\\
& \le \int_{\Omega}(u(x)-v(x))^2 dx.
\end{align*}
This implies assumption (A4) with $\ell(t)\equiv 1$.
All in all, Theorem \ref{main:theorem} applies and yields
\[\dist_{\h}(\mc{S},\mc{S}_N)_{L^2(0,T;H)}\rightarrow 0\quad\text{as}\quad N\rightarrow\infty.\]
Let us finally note that the weakened assumption (A3') from
section~\ref{sec:extension} allows to treat more general nonlinearities,
for example
\begin{equation*}
  g(\eta) = \eta(1 - |\eta|^{2- \varepsilon}) \quad \text{for some} \;
  0< \varepsilon \le 2.
\end{equation*}
However, it remains as an open problem whether our results extend to
the standard cubic nonlinearity with $\varepsilon =0$.

\end{document}